\documentclass{amsart}
\usepackage[utf8]{inputenc}

\usepackage{amsmath,amsfonts, amssymb}
\usepackage{graphicx}
\usepackage{xcolor}
\usepackage{enumerate}
\usepackage{enumitem}
\usepackage{cite}
\usepackage{tabu}

\usepackage{stmaryrd} 

\usepackage{fullpage}
\usepackage[active]{srcltx}

\usepackage{changepage}
\usepackage{chngcntr}

\usepackage{multibib}

\usepackage{url}
\usepackage{hyperref}
\hypersetup{
	colorlinks,
	linkcolor={red!50!black},
	citecolor={blue!50!black},
	urlcolor={blue!80!black}
}  

\theoremstyle{plain}
\newtheorem{theorem}{Theorem}[section]

\newtheorem{cor}[theorem]{Corollary}

\newtheorem{prop}[theorem]{Proposition}
\newtheorem{lemma}[theorem]{Lemma}
\newtheorem{claim}[theorem]{Claim}

\newtheorem*{proptanglesum}{Proposition~\ref{prop:unknottingtanglesum}}

\newtheorem*{properrationalunknottingtanglesum}{Proposition~\ref{prop:properrationalunknottingtanglesum}}

\theoremstyle{definition}
\newtheorem{defn}[theorem]{Definition}
\newtheorem{remark}[theorem]{Remark}

\newcommand{\comment}[1]{}

\newcommand{\bdry}{\ensuremath{\partial}}

\newcommand{\boundary}{\bdry}

\newcommand{\uprop}{\ensuremath{u_{\rm{prop}}}}

\newcommand{\nbhd}{\ensuremath{\mathcal{N}}}

\newcommand{\Q}{\ensuremath{\mathbb{Q}}}

\newcommand{\Z}{\ensuremath{\mathbb{Z}}}

\newcommand{\cut}{\ensuremath{\backslash}}

\newcommand{\thetan}{\theta}

\newcommand{\emphsz}[1]{\textbf{#1}}
\newcommand{\trivtheta}{\theta_0}


\definecolor{amaranth}{rgb}{0.9, 0.17, 0.31} 
\definecolor{carrotorange}{rgb}{0.93, 0.57, 0.13} 
\definecolor{citrine}{rgb}{0.89, 0.82, 0.04} 
\definecolor{dartmouthgreen}{rgb}{0.05, 0.5, 0.06} 
\definecolor{ballblue}{rgb}{0.13, 0.67, 0.8} 
\definecolor{ceruleanblue}{rgb}{0.16, 0.32, 0.75} 
\definecolor{amethyst}{rgb}{0.6, 0.4, 0.8} 
\definecolor{amber}{rgb}{1.0, 0.75, 0.0} 
\definecolor{burlywood}{rgb}{0.87, 0.72, 0.53} 



\title{Primality of theta-curves with proper rational tangle unknotting number one}

\makeatletter
\@namedef{subjclassname@1991}{2020 Mathematics Subject Classification}
\makeatother

\author{Kenneth L. Baker}
\address{Department of Mathematics, 
University of Miami}   
\email{k.baker@math.miami.edu}                      

\author{Dorothy Buck}
\address{Department of Mathematics \& Department of Biology, 
Duke University}   
\email{dbuck@math.duke.edu}        

\author{Danielle O'Donnol}
\address{Department of Mathematics,
Marymount University}   
\email{dodonnol@marymount.edu} 

\author{Allison H. Moore}
\address{Department of Mathematics \& Applied Mathematics,  Virginia Commonwealth University}   
\email{moorea14@vcu.edu}   

\author{Scott Taylor}
\address{Department of Mathematics, 
Colby College}   
\email{scott.taylor@colby.edu}

\keywords{spatial graphs, theta-curves, unknotting, tangle replacement}

\subjclass{57K10, 57K12 (primary)}

\date{\today}
\begin{document}

\begin{abstract}
    We show that if a composite $\theta$-curve has (proper rational) unknotting number one, then it is the order 2 sum of a (proper rational) unknotting number one knot and a trivial $\theta$-curve. We also prove similar results for 2-strand tangles and knotoids.
\end{abstract}

\maketitle

\tableofcontents   

\section{Introduction}
A spatial graph is a tame embedding of an abstract finite graph in the three-sphere $S^3$.  A $\theta$-curve is a spatial graph consisting of two vertices each joined by three edges. A spatial, planar graph is considered \emphsz{ unknotted} if it is contained in an embedded two-sphere, and the \emphsz{ unknotting number} $u$ of a spatial graph is the minimum number of crossing changes required to make it unknotted.  As with knots, the unknotting number is an extremely basic invariant of abstractly planar spatial graphs whose calculation can quickly become intractable. To illustrate the difficulty in understanding unknotting number, note that the additivity of unknotting number under connected sum is open for both knots and spatial graphs. However, Scharlemann showed that all unknotting number one knots are prime \cite{Scharlemann:Unknotting}.  We prove a version of Scharlemann's theorem for $\theta$-curves. As we explain in Section \ref{bio}, we were motivated by certain biological considerations. 

Although, initially one might expect the version for $\theta$-curves to follow easily from Scharlemann's theorem or the machinery of double-branched covers, the situation is subtler. In particular, for knots and $\theta$-curves, there are two natural notions of connected sum: order 2 connected sum and order 3 connected sum, both depicted in Figure~\ref{fig:thetasums}.  In particular, observe that a composite $\theta$-curve which is the order 2 connected sum of a trivial $\theta$-curve with an unknotting number one knot, itself has unknotting number one. It turns out, however, that is the only possible exception to the claim that unknotting number one $\theta$-curves are prime. We prove:

\begin{theorem}\label{main theorem}
Let $\thetan$ be a $\theta$--curve.  If $u(\thetan)=1$ then either
\begin{itemize}
    \item $\thetan$ is prime with respect to both order 2 connected sum $\#_2$ and order 3 connected sum $\#_3$, or
    \item $\theta = \trivtheta \#_2 K$ where $\trivtheta$ is the trivial $\thetan$--curve and $K$ is a prime knot with $u(K)=1$. 
    \end{itemize}
In the latter case, up to isotopy, the unknotting crossing change occurs on $K$.
\end{theorem}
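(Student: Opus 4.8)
The plan is to encode the unknotting operation as surgery and then control how a hypothetical nontrivial decomposing sphere interacts with the surgery locus. Write the crossing change as $\pm 1$--surgery on a crossing circle $C$, an unknot bounding a crossing disk $D$ meeting $\thetan$ transversally in exactly the two points where the strands of the crossing pass through $D$; performing the surgery yields the trivial curve $\trivtheta$. First I would record two inputs: that $\trivtheta$ is prime with respect to both $\#_2$ and $\#_3$ (equivalently, every order $2$ or order $3$ decomposing sphere for $\trivtheta$ has a trivial side), and Scharlemann's theorem \cite{Scharlemann:Unknotting} that an unknotting number one knot is prime. Now suppose $\thetan$ is composite, and fix a nontrivial decomposing sphere $S$ of order $n \in \{2,3\}$ meeting $\thetan$ in exactly $n$ points and, among all such, meeting $D$ minimally; such an $S$ is incompressible in the exterior of $\thetan$.

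The crux is to arrange $S \cap C = \emptyset$, so that $C$ lies in one of the two balls $B_0, B_1$ bounded by $S$ and the surgery is supported on a single side. I would attack this by an innermost/outermost analysis of the $1$--manifold $S \cap D$. Circles of $S \cap D$ bounding subdisks of $D$ disjoint from $\thetan$ are removed using incompressibility of $S$; the remaining circles and any arcs (with endpoints on $C$) encircle or separate the two points of $D \cap \thetan$, and each such configuration must be traded for a reduction of $|S \cap \thetan|$ or $|S \cap D|$, or else converted into the desired isotopy pushing $S$ off $C$. This step is the main obstacle: it is the $\thetan$--curve analogue of the hard part of Scharlemann's argument, and it is genuinely harder here because the standard shortcut --- passing to the double branched cover and invoking the Montesinos trick --- is unavailable, since the trivalent vertices of a $\thetan$--curve make the branched cover singular. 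I would therefore run the reduction directly in the exterior of $\thetan \cup C$, using sutured manifold theory (or a thin--position argument) to rule out the intersection patterns that cannot be removed by elementary isotopy; alternatively one can route the same content through an unknotting result for $2$--strand tangles, cutting $S^3$ along $S$ to expose the crossing change as a tangle replacement.

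Once $S \cap C = \emptyset$ is achieved, the remainder is bookkeeping. Say $C \subset B_1$, so $B_0$ is unchanged by the surgery and $S$ remains a decomposing sphere of the same order for $\trivtheta$; by primality of $\trivtheta$, one side of $S$ is trivial. In the order $3$ case the unchanged side $B_0$ cannot be trivial (it is a nontrivial factor of $\thetan$), so the post--surgery side $B_1$ is the trivial tripod; but then $\trivtheta$ is the order $3$ sum of $B_0$ with a trivial tripod, forcing $B_0$ itself to be trivial --- a contradiction. Hence $\thetan$ admits no nontrivial order $3$ decomposition. In the order $2$ case, present the decomposition as tying a knot $K$ into an edge. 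If the crossing change lies on the graph side, the knot side is unchanged and primality of $\trivtheta$ forces $K$ to be trivial, contradicting nontriviality. Therefore the crossing change lies on the knot side; primality of $\trivtheta$ then forces the cofactor $\thetan$--curve to be $\trivtheta$, so $\thetan = \trivtheta \#_2 K$, the single crossing change occurs on $K$ and unknots it, whence $u(K)=1$ and $K$ is prime by Scharlemann's theorem. Combining the cases, a composite $u=1$ curve has no nontrivial order $3$ decomposition and its only nontrivial decomposition is $\trivtheta \#_2 K$, which is exactly the stated dichotomy.
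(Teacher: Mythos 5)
Your overall architecture is sound and your endgame bookkeeping (once the surgery locus is disjoint from the decomposing sphere, primality of $\trivtheta$ forces the stated dichotomy, with Scharlemann's theorem supplying primality of $K$) matches the paper's. But the step you yourself flag as ``the main obstacle'' --- arranging $S \cap C = \emptyset$, or otherwise deriving a contradiction when the crossing change must interact with an essential summing sphere --- is exactly where the entire proof lives, and your proposal contains no argument for it, only an appeal to ``sutured manifold theory (or a thin--position argument)'' with no indication of how either would rule out the residual intersection patterns. This is a genuine gap, not a routine reduction: the paper does not succeed in isotoping the crossing arc off an order~$3$ summing sphere at all. Instead it removes a neighborhood of an edge $e$ of $\thetan$ disjoint from the crossing arc, so that the sphere becomes an essential summing disk decomposing the resulting $2$--strand tangle $T$ as $T_1+T_2$ with neither summand integral, and then proves (Propositions~\ref{prop:unknottingtanglesum} and \ref{prop:properrationalunknottingtanglesum}) that such a sum admits no crossing change to an integral tangle. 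That proposition is itself the heavy lifting: it requires closing $T$ off with a bespoke auxiliary tangle $\rho$ (unknotted numerator closure, hyperbolic double branched cover, denominator closure $10_{139}$ with unknotting number $4$) to manufacture an unknotting number one knot with an essential Conway sphere, and then invoking Gordon--Luecke's classification \cite[Theorem 6.2]{GordonLuecke} together with an analysis of the Eudave-Mu\~noz tangles. Nothing in your sketch substitutes for this input.

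A secondary but substantive error: you assert that the double branched cover / Montesinos trick is ``unavailable'' because the trivalent vertices make the cover singular. The paper's workaround is precisely to delete a regular neighborhood of an edge missed by the crossing arc, reducing to a $2$--strand tangle whose double branched cover is an honest manifold; branched covers are then used throughout --- in the locally knotted case via Scharlemann's theorem on Dehn surgery producing reducible manifolds \cite{Scharlemann:Producing} together with the Equivariant Sphere Theorem, and in the vertex--sum case via the Gordon--Luecke machinery just described. So the tool you discard is in fact the one the proof runs on. Finally, note that your order~$2$ bookkeeping quietly assumes the crossing arc meets at most one edge-exterior favorably; the paper first has to argue (start of Proposition~\ref{prop:unknottinglocalknot}) that an edge disjoint from the crossing arc carries no local knot, since otherwise a constituent knot survives the crossing change --- a small but necessary step your outline skips.
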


\begin{figure}
    \centering
    \includegraphics[width=.9\textwidth]{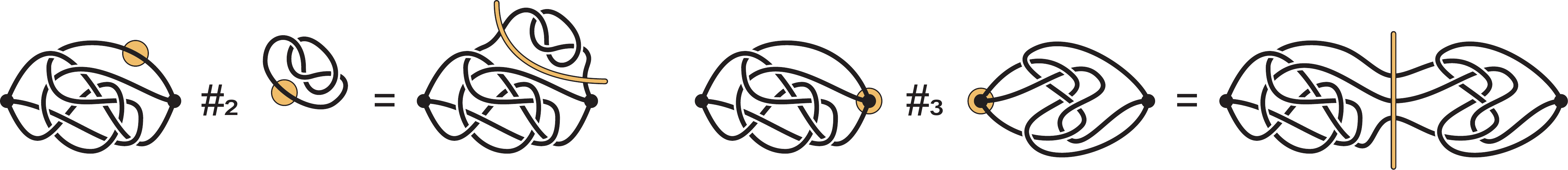}
    \caption{Left: an order 2 connected sum of a theta curve and a knot.  Right: an order 3 connected sum of two theta curves. }
    \label{fig:thetasums}
\end{figure}

\begin{figure}
    \centering
    \includegraphics[width=.9\textwidth]{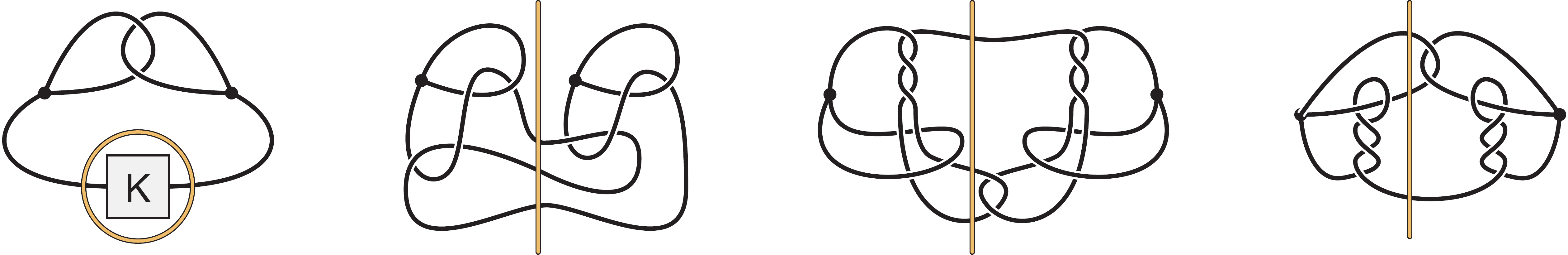}
    \caption{Four examples of composite handcuff curves with unknotting number one. These directly generalize to examples of unknotting proper rational tangle replacements that are not crossing changes.}
    \label{fig:compositehandcuff}
\end{figure}

Theorem \ref{main theorem} follows from a similar theorem for 2-strand tangles. Deferring the definition of prime tangle to Section~\ref{sec: spheres},  we show:

\begin{proptanglesum}
Let $T$ be a marked tangle sum of tangles, neither of which is integral. Then $T$ admits no crossing change to an integral tangle.
\end{proptanglesum}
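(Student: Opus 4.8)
The plan is to pass to double branched covers and exploit the essential annulus produced by the tangle sum, reducing the statement to an \emph{additivity} property: a marked tangle sum can be integral only if both of its summands are integral. Write $T = T_1 + T_2$ for the given marked tangle sum and let $P$ be the splitting disk, a disk properly embedded in the tangle ball meeting $T$ transversely in two points whose boundary separates the four endpoints of $T$ into the pairs coming from $T_1$ and $T_2$. Taking double branched covers over the strands, $\Sigma(T)$ is a compact orientable $3$-manifold with torus boundary, the preimage $A$ of $P$ is an annulus (the branched double cover of a disk meeting the branch set in two points has Euler characteristic $0$ and is connected and orientable), and $A$ cuts $\Sigma(T)$ into $\Sigma(T_1)$ and $\Sigma(T_2)$. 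Because $\partial P$ separates the four branch points in pairs, the core of $A$ is essential on each boundary torus.

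First I would establish the additivity lemma. Suppose a marked tangle sum $X+Y$ is integral. Then $\Sigma(X+Y)$ is a solid torus inside which $A$ is a properly embedded annulus whose boundary is the vertical splitting slope. A short pillowcase computation shows this slope is transverse to the meridian of the solid torus, which realizes the integral slope of $X+Y$; hence $\partial A$ does not bound a disk and $A$ is incompressible. An incompressible annulus in a solid torus is boundary parallel, so one of $\Sigma(X), \Sigma(Y)$ is a collar $A \times I$ and the other is again a solid torus. Tracking the slopes, the collar side forces that summand to be integral, and since attaching a collar does not change the homeomorphism type or the meridian, the remaining solid torus also has integral meridian; hence both $X$ and $Y$ are integral. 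This is the key reduction, and it is exactly where the hypothesis enters: a crossing change can repair at most one side of the sum.

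With the lemma in hand, I finish via the Montesinos trick. A crossing change is a rational tangle replacement inside a crossing ball $B_c$ meeting $T$ in the two arcs of a single crossing; upstairs it corresponds to a Dehn surgery on a single knot $\tilde{k} \subset \Sigma(T)$, the core of the solid torus covering $B_c$, the two crossing choices giving surgery slopes at distance two. If $B_c$ can be isotoped off the splitting disk $P$, then the crossing change is supported in $T_1$ or in $T_2$, so the result is $T_1' + T_2$ or $T_1 + T_2'$; were it integral, the additivity lemma would force $T_2$ (resp.\ $T_1$) to be integral, contrary to hypothesis. The crux, and the step I expect to be the main obstacle, is the case in which $B_c$ meets $P$ essentially: here I would put $\tilde{k}$ in minimal position with respect to the essential annulus $A$ and argue, via an innermost-disk and outermost-arc analysis of $A \cap \tilde{k}$ together with the fact that the surgered manifold is a solid torus while $A$ is incompressible in $\Sigma(T)$, that $A$ can be rechosen disjoint from $\tilde{k}$, thereby reducing to the localized case. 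Showing that a single crossing change cannot straddle the summand decomposition in a way that yields an integral tangle is where the real content lies.
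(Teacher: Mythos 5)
The first half of your plan is sound and matches the paper: your ``additivity lemma'' is the paper's Corollary~\ref{cor:integral sum} (which the paper even remarks can be proved via branched double covers, as you do), and the observation that a crossing change supported in one summand cannot produce an integral tangle is exactly the paper's Lemma~\ref{lem:essSummingdiskalpha}. The genuine gap is the step you yourself flag as the crux: the claim that the essential annulus $A$ (equivalently, the summing disk downstairs) can be rechosen disjoint from the surgery curve $\tilde{k}$ by an innermost-disk/outermost-arc analysis. No such elementary argument exists, because the statement is \emph{false} in general: Eudave-Mu\~noz constructed hyperbolic knots in $S^3$ admitting distance-two (half-integral) surgeries yielding toroidal manifolds, and the corresponding EM-tangles are precisely marked tangle sums of non-integral tangles in which the unknotting crossing arc can \emph{not} be isotoped off the summing disk --- the essential annulus upstairs necessarily meets the surgery curve. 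The fact that the surgered manifold is a solid torus and $A$ is incompressible before surgery does not force disjointness; it only tells you $A$ does not survive the surgery, which is consistent with $\tilde{k}$ piercing $A$.

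This is why the paper cannot avoid heavy machinery. It forms the knot $K_\rho = N(\rho + T)$ using an auxiliary tangle $\rho$ engineered so that $N(\rho)$ is the unknot, the double branched cover of $\rho$ is the (hyperbolic) figure-eight knot exterior, and $D(\rho) = 10_{139}$ has unknotting number $4$. Then $K_\rho$ is an unknotting-number-one knot with an essential Conway sphere, and Gordon--Luecke's Theorem 6.2 gives a trichotomy: either the unknotting arc misses all essential Conway spheres and tori (your hoped-for case, handled using the properties of $\rho$), or $K_\rho$ is an EM-knot, or $K_\rho$ contains an essential EM-tangle. The last two cases are the obstruction to your approach and must be excluded by hand: the EM-knot case via the atoroidality of $\rho$'s branched cover, and the EM-tangle case via an explicit computation showing that the crossing change along the standard unknotting arc of $A_2(\ell,m)$ yields the rational tangle $[0,-\ell,-m,-2]$, which is integral only for degenerate parameters. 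To complete your proof you would need to either reprove this classification or cite it; as written, the proposal assumes away exactly the hard cases.
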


The arguments for Proposition~\ref{prop:unknottingtanglesum} mostly adapt to using the more general proper rational tangle replacements (see Section~\ref{sec:rtr}) in place of crossing changes, though a different result from \cite{GordonLuecke} is employed.

\begin{properrationalunknottingtanglesum}
Let $T$ be a marked tangle sum of tangles, neither of which is integral. Then $T$ admits no proper rational tangle replacement to an integral tangle.
\end{properrationalunknottingtanglesum}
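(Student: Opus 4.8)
The plan is to follow the proof of Proposition~\ref{prop:unknottingtanglesum} essentially verbatim, substituting the Gordon--Luecke input only at the one point where the distance of a crossing change was used. Write $T = T_1 + T_2$ for the marked tangle sum, with marked summing (Conway) sphere $S$ meeting $T$ in four points and separating the non-integral summands $T_1, T_2$. Suppose toward a contradiction that some proper rational tangle replacement, supported in a ball $B$ meeting $T$ in a rational tangle, carries $T$ to an integral tangle $I$. Passing to the double branched cover $M = \Sigma_2(T)$, the sphere $S$ lifts to a torus $\widetilde S$ splitting $M = M_1 \cup_{\widetilde S} M_2$, where $M_i = \Sigma_2(T_i)$; because neither $T_i$ is integral, $\widetilde S$ is essential, exactly as established for Proposition~\ref{prop:unknottingtanglesum}.

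Next I would lift the replacement itself. The ball $B$ lifts to a solid torus whose core is a knot $\widetilde c \subset M$, and the proper rational tangle replacement becomes a Dehn surgery on $\widetilde c$ whose two relevant slopes $\alpha$ (before) and $\beta$ (after) have intersection number $\Delta(\alpha,\beta)$ equal to the distance of the replacement; properness pins this distance down. Since $I$ is integral, its double branched cover $\Sigma_2(I)$ is a solid torus, with $\partial M = \partial \Sigma_2(I)$ the same torus and the surgery supported in the interior. Thus a single Dehn surgery on $\widetilde c$ must convert $M$, which contains the essential torus $\widetilde S$, into a solid torus, which contains no essential torus. This is precisely the configuration governed by the Dehn-surgery results of \cite{GordonLuecke}.

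The argument then splits on the position of $\widetilde c$ relative to $\widetilde S$. If, after isotopy, $\widetilde c$ can be pushed off $\widetilde S$ into one side, the replacement is supported within a single summand, say inside $T_1$, so the target $I$ still contains the Conway sphere $S$ with the unchanged non-integral summand $T_2$ on one side. Since an integral tangle admits no essential Conway sphere, $S$ must be inessential in $I$; tracing this back through the double branched cover as in Proposition~\ref{prop:unknottingtanglesum} contradicts the non-integrality of $T_2$. This case needs only the behavior of solid-torus fillings, and no crossing-change hypothesis. If instead $\widetilde c$ cannot be isotoped off $\widetilde S$, so that it meets the essential torus essentially, I would invoke the appropriate distance bound of \cite{GordonLuecke} for producing a $\partial$-reducible manifold across an essential torus and compare it with the distance forced by properness of the replacement to obtain a contradiction.

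I expect the last case to be the crux. For an honest crossing change the distance is fixed and the original Gordon--Luecke theorem on essential Conway spheres applies directly; for a general proper rational tangle replacement the distance is only constrained by properness, so the real work is to confirm that the more general Gordon--Luecke bound still excludes the surgery and to carry out the slope bookkeeping carefully. A secondary obstacle is verifying, purely from non-integrality of the summands, that $\widetilde S$ is genuinely essential and ruling out the target solid torus arising from some degenerate or boundary-parallel configuration of $\widetilde c$.
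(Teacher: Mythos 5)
There is a genuine gap, and it begins with the setup. A marked tangle sum $T = T_1 + T_2$ is decomposed by a \emph{twice-punctured disk} (the summing disk), not by a Conway sphere in the interior of the ball; in the double branched cover $\Sigma_2(T)$ that disk lifts to an essential \emph{annulus}, not a torus, so the splitting $M = M_1 \cup_{\widetilde S} M_2$ along an essential torus that you posit does not exist as described. This is exactly why the paper does not work with $\Sigma_2(T)$ directly: it first forms the knot $K_\rho = N(\rho + T)$ using the auxiliary tangle $\rho$ of Figure~\ref{fig:rho}, chosen so that $N(\rho)$ is unknotted, $\Sigma_2(\rho)$ is the (hyperbolic, hence atoroidal and anannular) figure-eight exterior, and $D(\rho) = 10_{139}$ has $\uprop \geq 2$. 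Only after this closure does $\bdry T$ become an honest essential Conway sphere of a knot in $S^3$, lifting to an essential torus in a closed double branched cover, which is the setting of the Gordon--Luecke theorems. Your proposal omits this device entirely, and without it (or a substitute) the surgery picture you describe is not available.

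The larger gap is in your crux case. The Gordon--Luecke results do not ``exclude the surgery'' by a distance bound: for distance $2$, \cite[Theorem~6.2]{GordonLuecke} has exceptional conclusions (EM-knots and EM-tangles), and for distance $>2$, \cite[Theorem~5.2]{GordonLuecke} concludes only that the surgery curve can be isotoped \emph{off} the essential tori of the filled manifold. The paper's argument therefore runs in the opposite direction: (i) it uses the special properties of $\rho$ --- that a proper RTR preserves the homotopy type of the strands and that $D(\rho)$ has proper rational unknotting number at least $2$ --- together with Corollary~\ref{cor:integral sum} and Claim~\ref{claim:T1orT2essential} to show the unknotting arc \emph{cannot} be isotoped off all essential Conway spheres of $K_\rho$ (the Case~(1) analysis of Proposition~\ref{prop:unknottingtanglesum}); and then (ii) for $\Delta > 2$ it applies Theorem~5.2 to the double branched cover of the arc exterior, where only cases (1) and (4) survive and both force the lifted arc to be disjoint from the essential tori, yielding the contradiction. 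Your easy case (surgery curve pushed off the decomposing surface) does correspond to Lemma~\ref{lem:essSummingdiskalpha}, and your reduction of the distance-$2$ case to the crossing-change proposition matches the paper; but everything you defer to ``the appropriate distance bound'' --- including, in the distance-$2$ case, the exclusion of the Eudave-Mu\~noz knots and tangles $A_1(\ell,m)$ and $A_2(\ell,m)$ via Proposition~\ref{prop:essconwaydisjointsummingdisk} and Claim~\ref{claim:rho} --- is where the actual content of the proof lies.
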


Consequently, we obtain a generalization of Theorem~\ref{main theorem} using proper rational tangle replacements.  The \emphsz{proper rational unknotting number}  $\uprop$ of a spatial graph  is the minimum number of proper rational tangle replacements required to make it unknotted, see Section~\ref{sec:rtr} and \cite{properrationalunknotting}. 

\begin{theorem}\label{thm:propermain}
Let $\thetan$ be a $\theta$--curve.  If $\uprop(\thetan)=1$ then either
\begin{itemize}
    \item $\thetan$ is prime with respect to both order 2 connected sum $\#_2$ and order 3 connected sum $\#_3$, or
    \item $\theta = \trivtheta \#_2 K$ where $\trivtheta$ is the trivial $\thetan$--curve and $K$ is a prime knot with $\uprop(K)=1$. 
    \end{itemize}
In the latter case, up to isotopy, the unknotting proper rational tangle replacement occurs on $K$.
\end{theorem}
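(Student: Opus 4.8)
The plan is to deduce Theorem~\ref{thm:propermain} from Proposition~\ref{prop:properrationalunknottingtanglesum} by essentially the same reduction that takes Theorem~\ref{main theorem} from Proposition~\ref{prop:unknottingtanglesum}, so my first step would be to understand that earlier reduction and then check that nothing in it is special to crossing changes. Concretely, suppose $\uprop(\thetan)=1$ via a proper rational tangle replacement and suppose, for contradiction, that $\thetan$ is composite. I would first treat the order 3 sum and the (nontrivial) order 2 sum together by exhibiting an essential 2-sphere (or, for $\#_3$, an appropriate essential surface meeting the graph) that decomposes $\thetan$. Intersecting the spatial graph with this decomposing sphere yields a marked tangle sum decomposition $T = T_1 + T_2$ of the tangle obtained by removing a ball around the replacement site; the hypothesis that neither summand is trivial should translate into neither $T_i$ being integral.

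\smallskip

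The heart of the argument is then to recognize the unknotting hypothesis in tangle language. Unknotting the $\theta$-curve means that after the single proper rational tangle replacement the result lies on a sphere, i.e. the replaced tangle becomes integral (rational of the trivial type that sits in the boundary pattern). Thus the assumption $\uprop(\thetan)=1$ with $\thetan$ composite produces exactly the forbidden configuration of Proposition~\ref{prop:properrationalunknottingtanglesum}: a marked tangle sum of two non-integral tangles admitting a proper rational tangle replacement to an integral tangle. This contradiction rules out the composite case entirely \emph{except} for the order 2 sum with a trivial $\theta$-summand, where one of the tangle summands \emph{is} permitted to be trivial and the Proposition does not apply. I would handle that remaining case directly: there $\thetan = \trivtheta \#_2 K$ for some knot $K$, and the replacement, being disjoint from the trivial summand up to isotopy, must realize $\uprop(K)=1$; Scharlemann-type primality of proper rational unknotting number one knots (the knot analogue, cited via \cite{properrationalunknotting}) then forces $K$ to be prime, giving the stated conclusion that the replacement occurs on $K$.

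\smallskip

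The main obstacle I anticipate is the isotopy/normalization step asserting that the replacement site can be taken disjoint from the decomposing sphere, so that the decomposition of $\thetan$ genuinely induces a tangle sum with the replacement confined to one tangle. In the crossing-change setting this is arranged by an innermost-disk and incompressibility argument on the decomposing sphere; for proper rational tangle replacements the replacement ball is larger and one must argue that it can be isotoped off the sphere (or that the sphere can be isotoped to meet the complement of the replacement ball in the expected way). I expect this to be exactly the point where a different input from \cite{GordonLuecke} is needed in place of the crossing-change result, as the excerpt already flags; the key is that the relevant Gordon--Luecke statement controls how rational tangle replacements interact with essential spheres in the complement, ensuring the tangle sum structure survives the replacement. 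Once that positioning is secured, the rest of the deduction is formal, mirroring the proof of Theorem~\ref{main theorem}.
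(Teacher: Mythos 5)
Your treatment of the order~3 (vertex-sum) case is essentially the paper's, modulo one bookkeeping error: the 2-strand tangle is obtained by removing a neighborhood of an edge $e$ of $\thetan$ disjoint from the replacement arc $\alpha$, not ``a ball around the replacement site'' (the exterior of the replacement ball still contains the two vertices and three edges, so it is not a 2-strand tangle, and in any case the replacement must happen \emph{inside} the tangle to which Proposition~\ref{prop:properrationalunknottingtanglesum} is applied). With that fix, the 3-point summing sphere becomes a twice-punctured essential summing disk exhibiting $T=T_1+T_2$ with neither summand integral, and the proposition gives the contradiction exactly as in the paper's Proposition~\ref{prop:unknottingvertexsum}.

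The genuine gap is the order~2 (locally knotted) case. A summing sphere meeting $\thetan$ in two points lies entirely inside the edge-exterior tangle and bounds a ball meeting one strand in a knotted arc; it does \emph{not} become a twice-punctured summing disk, so it never produces the ``marked tangle sum of two non-integral tangles'' configuration of Proposition~\ref{prop:properrationalunknottingtanglesum}. (Indeed, that proposition and the whole tangle-sum machinery are developed under a standing no-local-knotting hypothesis.) Hence your claimed dichotomy is not established: the tangle-sum proposition does not rule out $\thetan_1\#_2 K$ with $\thetan_1$ a nontrivial $\theta$-curve, and nothing in your argument shows that the replacement arc can be isotoped into the local-knot ball, which is exactly what is needed both to force the complementary $\theta$-curve to be trivial and to conclude $\uprop(K)=1$. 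The paper handles this case by a separate argument (Proposition~\ref{prop:unknottinglocalknot}): pass to the double branched cover of the edge-exterior tangle, where the proper RTR becomes a non-integral Dehn surgery on the lift of $\alpha$ converting a reducible manifold into a solid torus; Scharlemann's theorem on surgeries producing reducible manifolds \cite{Scharlemann:Producing} forces the exterior of the lifted arc to be reducible, and the Equivariant Sphere Theorem \cite{MSY} yields an equivariant reducing sphere projecting to a 2-point summing sphere into whose ball $\alpha$ must be isotoped. The Gordon--Luecke results you invoke for the positioning step are in fact used elsewhere (inside the proofs of Propositions~\ref{prop:unknottingtanglesum} and~\ref{prop:properrationalunknottingtanglesum}, to control Conway spheres), not here. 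Without the Scharlemann/equivariant-sphere argument or a substitute, the second bullet of the theorem and the final assertion about where the replacement occurs remain unproved.
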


Ultimately, Theorem~\ref{main theorem} is a special case of Theorem~\ref{thm:propermain} since crossing changes are proper rational tangle replacements.  All of the arguments involving crossing changes readily adapt to proper rational tangle replacements with the exception of Proposition~\ref{prop:unknottingtanglesum} which needs its generalization Proposition~\ref{prop:properrationalunknottingtanglesum}.

\begin{proof}[Proof of Theorem~\ref{thm:propermain} (and Theorem~\ref{main theorem}).]
Suppose $\thetan$ is a non-prime $\theta$--curve with $\uprop(\thetan) = 1$. 

The argument splits into two cases.

\noindent
{\bf Case 1.} Assume $\thetan$ is locally knotted.   Then Proposition~\ref{prop:unknottinglocalknot} gives the desired conclusion.  It shows that $\thetan$ is the connected sum of a non-trivial knot $K$ with $\uprop(K)=1$ and a trivial $\theta$--curve.  Moreover, any proper rational unknotting arc for $\thetan$ is isotopic into the ball containing the local knot.

\noindent
{\bf Case 2.} Assume $\thetan$ is a non-trivial vertex sum that is not locally knotted.   Then Proposition~\ref{prop:unknottingvertexsum} shows that this contradicts the assumption that $\uprop(\thetan)=1$.
\end{proof}

\begin{remark}\label{rem:handcuff} In the statements of Theorems \ref{main theorem} and \ref{thm:propermain}, the restriction to $\theta$-curves is crucial. Figure~\ref{fig:compositehandcuff} shows several examples of order 2 and order 3 composite \emphsz{handcuff curves} of unknotting number one. 
In the first one, observe that $K$ can be any knot, and so an unknotting number one handcuff curve can have arbitrarily many summands. The second and third are vertex sums of two knotted handcuff curves.  The fourth is a sum of a knotted theta curve and a knotted handcuff curve. Note however, in all these examples of Figure~\ref{fig:compositehandcuff}, there is an unknotting crossing change disjoint from a summing sphere. Must that always be the case?  
\end{remark}

\subsection{Knotoids}

A knotoid (due to Turaev) is an equivalence class of an oriented regular immersion of  an interval lying in a surface $\Sigma$, where $\Sigma$ is typically $S^2$ or $D^2$. The notion of equivalence is induced by planar isotopy and the three Reidemeister moves for knots, fixing the endpoints of the interval. 
An embedding of the interval into $\Sigma$ is a trivial knotoid. 
Analogously to the definition for knots, the unknotting number of a knotoid is the minimum number of crossing changes required to transform $k$ into the trivial knotoid.  We may similarly define the proper rational unknotting number of a knotoid.

Knotoids form a semigroup under the operation of knotoid multiplication, which is equivalent to vertex connected sum (i.e. an order 1 connected sum $k_1 \#_1 k_2$). 
Moreover, an order 2 connected sum $k_1 \#_2 K_2$ is equivalent to the knotoid product $k_1 \#_1 k_2$, where knotoid $k_2$ is obtained from the knot $K_2$ by removing a small open interval. 
A knotoid $k$ is prime if any decomposition $k=k_1 \#_1 k_2$ implies that $k_1$ or $k_2$ is trivial.

By \cite[Theorem 6.2]{Turaev}, there is a one-to-one correspondence between knotoids and oriented, ordered theta curves containing a fixed trivial constituent knot. In particular, given a knotoid $k$, the curve $\theta(k)$ is constructed by joining the two endpoints of the knotoid with two arcs above and below the surface $\Sigma$, which together form a trivial constituent. 
The product $k_1\#_1 k_2$ of knotoids corresponds with the vertex sum $\theta(k_1)\#_3 \theta(k_2)$ of $\theta$-curves.

\begin{cor}
Let $k$ be a knotoid.  If $\uprop(k)=1$, then $k$ is prime.  Hence, if $u(k)=1$, then $k$ is prime.
\end{cor}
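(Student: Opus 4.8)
The plan is to push the question to $\theta$--curves through Turaev's correspondence $k \mapsto \theta(k)$ and then apply Theorem~\ref{thm:propermain}. The final sentence is free: a crossing change is a proper rational tangle replacement, so $u(k)=1$ forces $\uprop(k)=1$, and it suffices to treat $\uprop$. Assume $k$ is nontrivial. I would first check that $\uprop(\theta(k))=1$. The curve $\theta(k)$ is built from a diagram of $k$ in $\Sigma$ by adjoining the two arcs of the fixed trivial constituent on the two sides of $\Sigma$; a proper rational tangle replacement realizing $\uprop(k)=1$ occurs in a ball that can be isotoped off those two arcs, so it is simultaneously a proper rational tangle replacement on $\theta(k)$. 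Since the correspondence is a bijection carrying the trivial knotoid to $\trivtheta$, the nontriviality of $k$ gives nontriviality of $\theta(k)$, while the replacement carries $k$ to the trivial knotoid and hence $\theta(k)$ to $\trivtheta$. Thus $\uprop(\theta(k))=1$.

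Next I would reduce primality of $k$ to primality of $\theta(k)$ with respect to $\#_3$: any knotoid factorization $k=k_1\#_1 k_2$ induces a vertex sum $\theta(k)=\theta(k_1)\#_3\theta(k_2)$ with $\theta(k_i)$ trivial precisely when $k_i$ is, so if $\theta(k)$ is $\#_3$--prime then some factor $k_i$ is trivial and $k$ is prime. Hence it suffices to show that $\theta(k)$ is prime with respect to $\#_3$. Applying Theorem~\ref{thm:propermain} to $\theta(k)$, the first alternative gives exactly this. The second alternative gives $\theta(k)=\trivtheta\#_2 K$ with $K$ a prime knot; here the fixed trivial constituent of $\theta(k)$ — the union of the two adjoined arcs — must remain unknotted, which forces the local knot $K$ to lie on the third edge, the one carrying $k$. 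Equivalently, $k$ is the knot--type knotoid $k_K$ associated to $K$.

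The main obstacle is this second alternative: I must rule out that a $\theta$--curve with a single local knot on one edge is a nontrivial vertex sum, i.e.\ that $\trivtheta\#_2 K$ is prime with respect to $\#_3$. I expect to argue this by localization, in the spirit of the locally knotted case (Proposition~\ref{prop:unknottinglocalknot}): a vertex--summing sphere meets each edge once and can be isotoped off the ball carrying the local knot on the single knotted edge, leaving a summing sphere for the trivial $\theta$--curve and thereby forcing one factor to be trivial. In knotoid terms this is the statement that the knot--type knotoid $k_K$ of a prime knot is itself prime, which one may alternatively verify by checking that Turaev's bijection, restricted to knot--type knotoids, intertwines knotoid multiplication with knot connected sum and hence carries primes to primes. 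Granting this, $\theta(k)$ is $\#_3$--prime in both alternatives, so $k$ is prime, and the corollary follows.
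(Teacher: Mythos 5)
Your proposal is correct and follows essentially the same route as the paper: apply Theorem~\ref{thm:propermain} to $\theta(k)$ via Turaev's correspondence, conclude primality of $k$ directly in the first alternative, and in the second alternative identify $k$ as the knot-type knotoid of a prime knot, which is prime. You simply make explicit some steps the paper leaves implicit (that $\uprop(\theta(k))=1$ and that $\trivtheta\#_2 K$ is $\#_3$-prime), and those fill-ins are sound.
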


\begin{proof}
Theorem~\ref{thm:propermain} implies that either $\thetan(k)$ is prime with respect to order 2 and 3 connected sums or that $\thetan(k) = \theta_0 \#_2 K$ for some prime knot $K$ of proper rational unknotting number one. 
In the the former case, $k$ is prime, and in the latter case, the knotoid $k$ is both prime and of knot-type, meaning the endpoints of $k$ are contained in the same region of $\Sigma$.  The last conclusion follows since a crossing change is a proper rational tangle replacement.
\end{proof}

A \emphsz{forbidden move} in a knotoid diagram is a planar move in which a single strand passes over or under a vertex. See \cite[Figure 1.2]{Barbensi}. Any knotoid can be made trivial via a finite sequence of forbidden moves. Consequently, the \emphsz{f-distance} between knotoids is the (well-defined) minimal number of forbidden moves required to transform one knotoid into another. 
A forbidden move in a knotoid $k$ corresponds directly with a crossing change in the associated $\thetan$-curve $\theta(k)$. Therefore, we obtain the immediate corollary:

\begin{cor}
A knotoid with $f$-distance 1 is prime. \qed
\end{cor}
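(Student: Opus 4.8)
The plan is to reduce this statement to the preceding corollary by way of the two correspondences recalled just above it. First I would unwind the hypothesis: since the $f$-distance counts the minimal number of forbidden moves needed to reach the trivial knotoid, saying that $k$ has $f$-distance $1$ means that a single forbidden move converts $k$ into the trivial knotoid, and moreover that $k$ is itself nontrivial (otherwise the $f$-distance would be $0$).

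Next I would translate this single move into the world of $\theta$-curves. A forbidden move in $k$ corresponds directly to a crossing change in the associated $\theta$-curve $\theta(k)$, and the trivial knotoid corresponds to the trivial $\theta$-curve $\theta_0$. Combining these, the one forbidden move realizing the $f$-distance becomes a single crossing change carrying $\theta(k)$ to $\theta_0$. Since $\theta(k)$ is nontrivial, this shows exactly that $u(\theta(k)) = 1$.

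With $u(\theta(k)) = 1$ established, I would apply Theorem~\ref{main theorem} (equivalently the crossing-change case of Theorem~\ref{thm:propermain}) to obtain the dichotomy: either $\theta(k)$ is prime with respect to both $\#_2$ and $\#_3$, or $\theta(k) = \theta_0 \#_2 K$ for some prime knot $K$ with $u(K) = 1$. In either case $\theta(k)$ is prime with respect to $\#_3$: the first alternative asserts this directly, while in the second the decomposition is an order $2$ sum and does not yield a nontrivial order $3$ (vertex) splitting. Since any nontrivial factorization $k = k_1 \#_1 k_2$ of the knotoid would correspond to a nontrivial vertex sum $\theta(k_1) \#_3 \theta(k_2)$, the $\#_3$-primality of $\theta(k)$ forces $k$ to be prime.

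The argument is essentially immediate once the correspondences are in place, and indeed it parallels the proof of the previous corollary with ``forbidden move'' in place of ``crossing change.'' The only point I expect to require care is the second branch of the dichotomy: one must observe that an order $2$ summing sphere for $\theta(k)$ does not descend to an order $1$ decomposition of the knotoid, so that $\#_3$-primality — and hence knotoid primality — is preserved even when $\theta(k)$ is reducible as an order $2$ sum.
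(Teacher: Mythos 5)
Your argument is correct and is essentially the paper's: the corollary is stated with an immediate \qed, resting exactly on the forbidden-move/crossing-change correspondence plus Theorem~\ref{main theorem} and the reasoning of the preceding corollary, which is what you have written out. One small imprecision in your final paragraph: by the paper's own remark an order $2$ sum $\theta_0 \#_2 K$ \emph{does} descend to a knotoid product $k_0 \#_1 k_K$ with $k_K$ of knot type, but since one factor is the trivial knotoid (and $K$ is prime) this does not obstruct primality of $k$ --- which is how the paper's previous corollary disposes of that branch.
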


\subsection{Biological Motivation}\label{bio}
The study of both $\theta$-graphs and knotoids is externally motivated by questions arising in molecular biology. 
Knotoids have recently been utilized to model knotting and entanglement in open proteins \cite{Goundaroulis1, Goundaroulis2}. 
Spatial theta-curves arise as intermediary structures during the process of DNA replication in circular plasmids and in organisms with circular DNA (eg \emphsz{E. coli}). 
Replication is the process through which a single DNA molecule reproduces to form two new identical DNA molecules.  
During this process the DNA double helix is partially unwound in the replication bubble; the two partially replicated strands (leading and lagging) together with the remaining parent DNA form a $\theta$-curve structure. 
Intriguingly, this $\thetan$--curve can be knotted \cite{AdamsCozzarelli, VigueraSchvartzman, SantamariaSchvartzman, LopezSchvartzman}.  
During replication, packing, and other cellular processes, enzymatic complexes (e.g. topoisomerases) mediate strand-passage events that knot and unknot DNA. 
This leads naturally to an investigation of unknotting number in knots and $\thetan$--curves. 
Unknotting number one $\thetan$--curves are precisely the knotted structures that may arise as replication intermediaries following a single strand-passage event.
A classification of such structures and understanding of unknotting processes in spatial $\theta$--curves can shed light on both the knotting that occurs during replication, and the processes that drive it.

\section{Definitions and Conventions}
We consider spatial $\thetan$-curves up to ambient isotopy. 
Because $\theta$-curves are trivalent graphs, there is no difference between the notions of equivalence from rigid vertex isotopy or topological (pliant) isotopy; the results stated throughout hold in either setting.

We consider pairs $(M,t)$ where $M$ is a 3-manifold obtained by removing some finite number (possibly zero) of open balls from $S^3$ and $t \subset M$ is a properly embedded 1-dimensional CW-complex. It is a \emphsz{tangle} if at least one ball has been removed from $S^3$ and if  $t$ is a 1--manifold. It is a \emphsz{2-strand tangle} if $t$ is the disjoint union of two arcs and $M$ is a 3-ball. For a surface $S \subset M$, we write $S \subset (M,t)$ to mean that $S$ is transverse to $t$. It is \emphsz{essential} if it not a 2-sphere bounding a 3-ball disjoint from $t$, is incompressible, and is not $\boundary$-parallel in the exterior of $t$. If $S$ is a sphere with $|S \cap t| \leq 3$, it is a \emphsz{summing sphere}. If $S$ is a sphere with $|S \cap t| = 4$, it is a \emphsz{Conway sphere}. 

\subsection{Rational tangles}\label{sec: rt}


If a two-string tangle $(B,t)$ is equipped with a homeomorphism taking $(B, t)$ to the ball of radius one in $\mathbb{R}^3$ and $\partial t$ is taken to the NE, SE, NW and SW corners of the equatorial sphere at $z=0$, i.e. the set of points $\{(\pm \cos(\pi/4), \pm(\sin(\pi/4), 0)\}$, we say that $(B, t)$ is a \emphsz{marked} two-string tangle. The \emphsz{numerator closure}, or  \emphsz{denominator closure}, of a marked two-string tangle are the knot or 2-component links obtained by joining the northern and southern points, or eastern and western points, respectively, by the equatorial arcs as shown in Figure \ref{fig:marked tangle}, left.
\begin{figure}
    \centering
    \includegraphics[width=.9\textwidth]{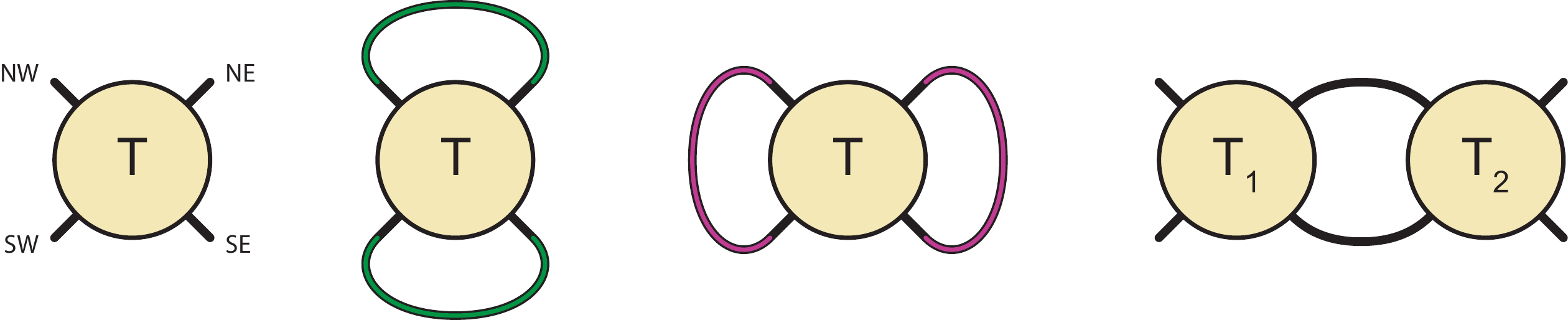}
    \caption{A marked two-string tangle, numerator closure indicated in green, denominator closure in magenta, and a tangle sum of marked tangles.}
    \label{fig:marked tangle}
\end{figure}
Two marked two-string tangles $T_i = (B_i, t_i)$ for $i = 1,2$ are \emphsz{equivalent} if they are isotopic by an isotopy preserving the markings. 

From two marked tangles $T_1, T_2$, we also may form a new marked two-string tangle $T_1 + T_2$, called the \emphsz{tangle sum}, by identifying the eastern half sphere of $B_1$ relative the points $\{NE, SE\}$ with the western half sphere of $B_2$ relative the points $\{NW, SW\}$, and isotoping $t_1 \cup t_2$ relative to its boundary to be properly embedded in $B_1 \cup B_2$, as in Figure \ref{fig:marked tangle}, right. The tangle sum is \emphsz{essential} if the summing disk (the image of the half spheres that were glued together) is not parallel (via an isotopy transverse to $t_1 \cup t_2$) to the boundary of the 3-ball.

A two-string tangle $(B, t)$ is \emphsz{trivial} if $t$ is isotopic into $\partial B$. It is \emphsz{split} if there is a properly embedded disk separating the strands. A marked two-string trivial tangle is \emphsz{rational}. Rational tangles are split. Famously, Conway \cite{conway} constructed a bijection between rational tangles and $\Q \cup \{\infty\}$ using continued fractions. Any rational number $\frac{\beta}{\alpha}$ can be expressed non-uniquely as a continued fraction
\[
    [a_0, a_1, \cdots, a_n] = a_0 - \frac{1}{a_1 - \frac{1}{a_2 \cdots  - \frac{1}{a_n}}}.
\]
The coefficients of the continued fraction determine the rational tangle. The \emphsz{Conway number} of a rational tangle $T$ is the associated element of $\Q \cup \{\infty\}$. Our conventions (which differ from Conway's) for constructing rational tangles are as given in Figure \ref{fig:rational tangles}
\begin{figure}
    \centering
    \includegraphics[width=0.8\textwidth]{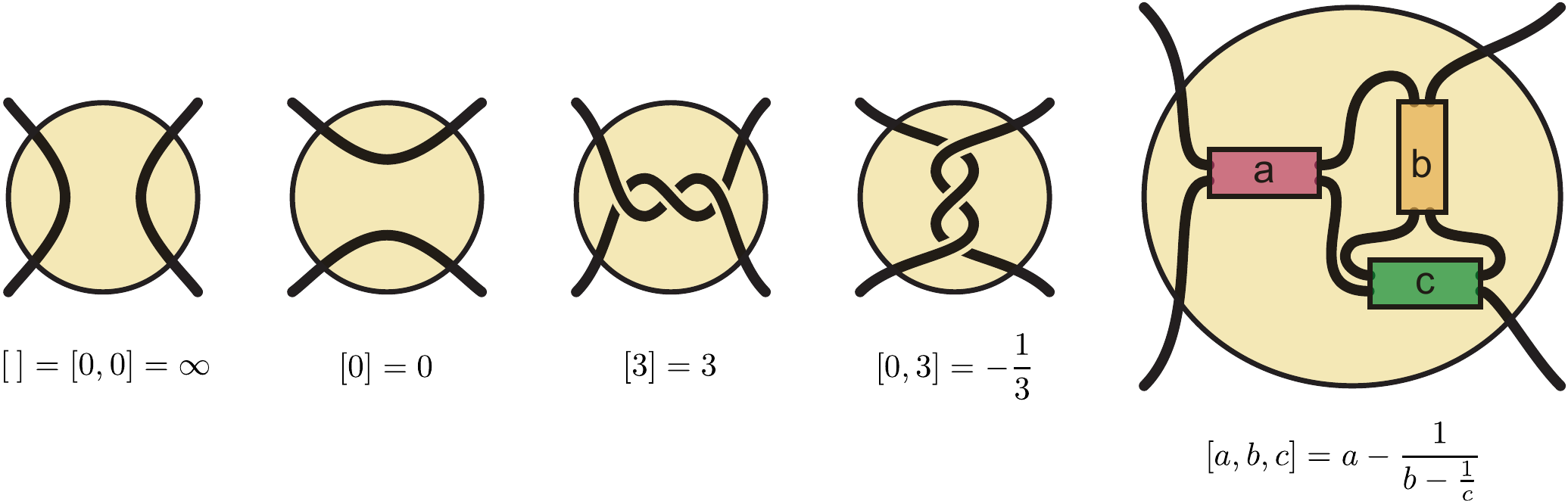}
    \caption{Rational tangle conventions}
    \label{fig:rational tangles}
\end{figure}

Suppose a marked two-string rational tangle $T = T_1 + T_2$ is the Conway sum of marked two-string tangles $T_1$ and $T_2$. Observe that for each $n \in \Z$, we also have $T = (T_1 + [n]) + ([-n] + T_2)$. We will need two decompose two-string tangles as Conway sums and this ambiguity causes some inconvenience. 

\subsection{Crossing Changes and Rational Tangle Replacement}\label{sec:rtr}

Suppose that $M$ is a 3-manifold and that $\tau \subset M$ is a properly embedded graph. Given an arc $\alpha \subset M$ such that $\alpha \cap \tau = \boundary \alpha$ and $\alpha$ is disjoint from the vertices of $\tau$, a \emphsz{rational tangle replacement} (RTR) of $\tau$ with \emphsz{core arc} $\alpha$ is defined as follows.

\begin{defn}\label{defn:CCRTR}
Consider a regular neighborhood $B$ of $\alpha$ and let $\lambda = \tau \cap B$. Then $T = (B, \lambda)$ is a trivial 2-strand tangle. By imposing a marking, without loss of generality, we may take it to be the $\infty$-tangle. (This choice is called a \emphsz{framing} of $\alpha$. Framings are parameterized by the integers.) Let $T' = (B, \lambda')$ be a rational tangle. Then $\tau' = \big((M,\tau) \setminus T\big) \cup T'$ is said to be obtained by a \emphsz{RTR} on $\tau$ with (framed) crossing arc $\alpha$. See Figure~\ref{fig:crossingchange}. When $(B, \lambda')$ is either of the $\pm 1/2$ tangles, the RTR is a \emphsz{crossing change}. When the strands $\lambda'$ are properly homotopic in $B$ to the strands $\lambda$, the RTR is a \emphsz{proper rational tangle replacement}.
\end{defn}

\begin{figure}
    \centering
    \includegraphics[width=0.55\textwidth]{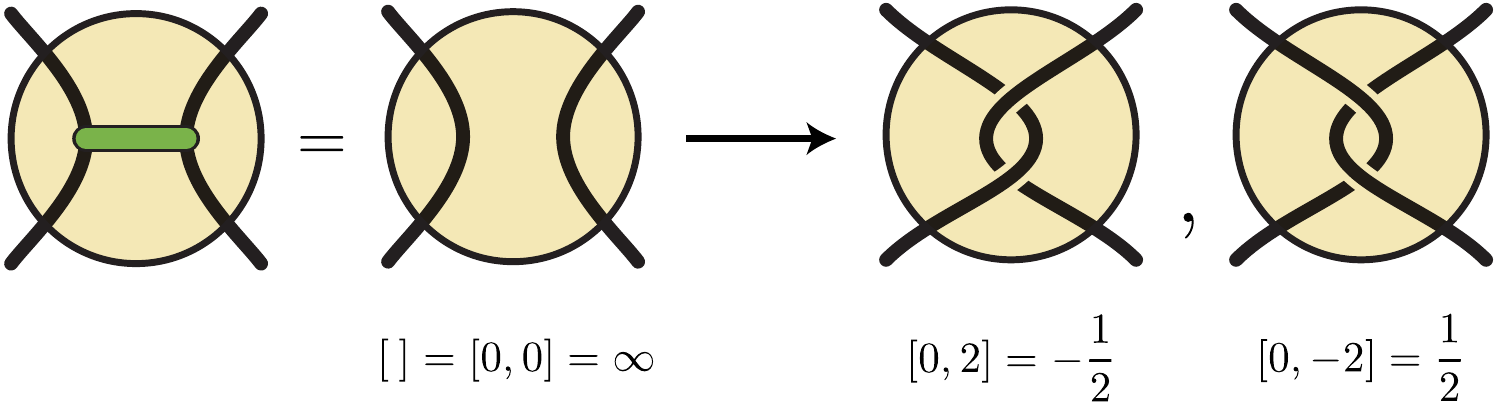}
    \caption{An arc $\alpha$ meeting $\tau$ has a regular neighborhood $B$ meeting $\tau$ in an $\infty$ tangle.   A crossing change of $\tau$ along $\alpha$ replaces this $\infty$ tangle with either of the $\pm \frac12$ tangles.}
    \label{fig:crossingchange}
\end{figure}

\begin{remark}
A crossing change could also be defined via $\pm 1$ Dehn surgery on a loop (called the \emphsz{crossing loop}) bounding a disc intersecting $\tau$ in two points. However, the crossing arc perspective is more useful for us. It is possible to translate results concerning crossing arcs into results concerning crossing loops and vice versa, although care is needed. See \cite{Lackenby}, for example.
\end{remark}

\begin{defn}
Given two rational tangles $(B, \lambda)$ and $(B, \lambda')$ with $\boundary \lambda = \boundary \lambda'$, each contains a properly embedded disc separating the strands, unique up to isotopy disjoint from the strands. The \emphsz{distance} $\Delta$ of an RTR is half the minimal number of intersections between the boundaries of such disks for the tangles $(B, \lambda)$ and $(B, \lambda')$ in Definition \ref{defn:CCRTR}. This coincides with other definitions of the distance between rational tangles in the literature.
\end{defn}

\begin{remark}
A crossing change has $\Delta = 2$. If $\Delta = 1$, the RTR is not a proper RTR.
\end{remark}

\subsection{Sums and Summing Spheres}\label{sec: spheres}
We give a more precise definition of connected sum than what is in the introduction. 

\begin{defn}
Suppose that $K_1$ and $K_2$ are spatial graphs in distinct copies of $S^3$. An order 2 connected sum is obtained as follows. For $i = 1,2$, remove a small open balls from the interior of an edge of $K_1, K_2$, respectively. This results in 3-balls $B_1, B_2$ containing the remnants of $K_1, K_2$ respectively. Glue $\boundary B_1$ to $\boundary B_2$ by a homeomorphism taking $K_1 \cap \boundary B_1$ to $K_2 \cap \boundary B_2$. The result is a spatial graph $K_1 \#_2 K_2$ in $S^3$.  We form an order $k$ connected sum $K_1 \#_3 K_2$ similarly, except we choose each of the small open balls to be centered at a degree $k$ vertex of $K_1, K_2$ respectively \cite{Wolcott}. Observe that the image of $\boundary B_1$ and $\boundary B_2$ is a \emphsz{summing sphere} for the sum. We can also similarly define order 2 sums for tangles. A spatial graph or tangle that has a nontrivial knot as an order 2 connect summand is said to be \emphsz{locally knotted}.

A $\theta$-curve is \emphsz{composite} if it is either an order 2 connected sum of a (possibly trivial) $\theta$-curve and a nontrivial knot or an order 3 connected sum of nontrivial $\theta$-curves. In such a case, as the summing sphere is essential, we also say that the sum is \emphsz{essential}. A $\theta$-curve which is neither trivial nor composite is \emphsz{prime}. 
\end{defn}

Conversely, if $S \subset S^3$ is a sphere transverse to a $\theta$-curve $\thetan$ and intersecting it in two or three points, we can cut $(S^3, \thetan)$ open along $S$ and glue in two copies of $(B^3, \tau)$ where $\tau$ is the cone on $\thetan \cap S$. If $S$ is essential and $|S \cap \thetan| = 3$, then neither component of the result is a trivial $\theta$-curve. If $|S \cap \thetan| = 2$, then one component $(S^3, \theta_1)$ of the result is a $\thetan$-curve and the other $(S^3, K)$ is a knot. 

\begin{remark}
Our definition of prime $\theta$-curve differs slightly from that of Turaev \cite{Turaev} and Matveev-Turaev \cite{MT}. They would say that the order 2 connected sum of a trivial $\theta$-curve with a prime knot is a prime $\theta$-curve, while we say that it is composite. Other than that difference, our definitions coincide.
\end{remark}

\subsection{Preliminary results on Rational Tangles and Conway Sums}

In this section we collect some well-known results which will be useful to us.

\begin{lemma}
\label{lem:lowpunct disks}
Let $T$ be a $2$--strand tangle without local knotting.
Suppose $T$ contains essential disks $D_1$ and $D_2$ each intersecting the strands of $T$ in zero or two points in their interior. 
Then 
\begin{enumerate}
    \item  $\bdry D_1$ and $\bdry D_2$ are isotopic in $\bdry T$,
    \item  the disks have the same number of punctures, and
    \item  after an isotopy of the disks $D_1 \cap D_2 = \emptyset$.
\end{enumerate}
Furthermore, if the disks are disjoint from the strands of $T$, then they are isotopic and $T$ is a rational tangle.
\end{lemma}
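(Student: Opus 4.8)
The plan is to put $D_1$ and $D_2$ in minimal position and read off (1)--(3) from the combinatorics of the four-punctured sphere $\bdry B$. First I would record two consequences of the hypotheses. Since $T$ has no local knotting, any sphere in $B$ meeting $t$ in at most two points bounds a ball meeting $t$ in an unknotted arc or in nothing; such ``small'' spheres are trivial and can be used to cancel intersections. Also, because each $D_i$ is essential, its boundary $\bdry D_i$ is an essential simple closed curve on the four-punctured sphere $\bdry B$: if $\bdry D_i$ bounded a disk on $\bdry B$ with at most one puncture, then capping off and invoking no local knotting would make $D_i$ compressible or $\bdry$-parallel. Thus each $\bdry D_i$ separates $\bdry t$ into two pairs and has a well-defined slope. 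I will use two standard facts about $\bdry B$: disjoint essential curves are isotopic (cutting along one yields two pairs of pants, in which every essential curve is boundary-parallel), and the slope of a curve together with the way $t$ joins the four boundary points determines the parity of its intersection with each strand.

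The technical heart is to isotope the disks so that $D_1 \cap D_2 = \emptyset$, which is conclusion (3). I would first make $\bdry D_1$ and $\bdry D_2$ meet minimally on $\bdry B$, then minimize $|D_1 \cap D_2|$, so that $D_1 \cap D_2$ is a disjoint union of circles and arcs. Circles are removed by an innermost-disk argument: an innermost circle on $D_2$ bounds a subdisk $E$ with at most two punctures, and the subdisk $E' \subset D_1$ it also bounds gives a sphere $E \cup E'$ meeting $t$ in an even number of points; when this number is at most two, no local knotting lets me cancel the circle. Arcs are removed by an outermost-arc argument: an outermost arc on $D_2$ cuts off a subdisk $E$ with at most two punctures and $\bdry E = a \cup b$ with $a \subset D_1$ and $b \subset \bdry D_2$, and using $E$ to $\bdry$-compress or surger $D_1$ reduces $|\bdry D_1 \cap \bdry D_2|$ while keeping the result essential. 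Iterating these moves drives $D_1 \cap D_2$ to the empty set.

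With $D_1 \cap D_2 = \emptyset$, the curves $\bdry D_1, \bdry D_2$ are disjoint essential curves on $\bdry B$, hence isotopic, giving (1). As they share a slope they meet each strand with the same parity, and an essential disk cannot meet one strand twice while missing the other (the cut-off subarc is unknotted by no local knotting, so the disk isotopes to a splitting disk); hence the slope pins the number of punctures to be $0$ or $2$ for both disks at once, giving (2). Equivalently, once the boundaries are isotoped together the two disjoint disks cobound a sphere whose only punctures are those of the disks, which by no local knotting bounds a trivial ball, so $D_1$ and $D_2$ are actually isotopic. For the final statement, if both disks are disjoint from $t$ then each is a splitting disk, $T$ is split, and no local knotting forces both strands to be unknotted and unlinked, so $t$ isotopes into $\bdry B$ and $T$ is rational; the uniqueness of the splitting disk is then cleanest in the double branched cover $\Sigma(T)$, where it lifts to a compressing disk, forcing the irreducible manifold $\Sigma(T)$ with compressible torus boundary to be a solid torus, in which meridian disks are unique up to isotopy.

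I expect the main obstacle to be the intersection-minimization of the second paragraph, specifically the bookkeeping of punctures during the cut-and-paste. The delicate case is when an innermost circle bounds subdisks in $D_1$ and in $D_2$ each carrying two punctures, so that $E \cup E'$ is a four-punctured (Conway) sphere that no local knotting does not directly remove; the low bound of two punctures per disk is what should let me avoid this, by choosing the innermost or outermost piece on the side carrying fewer punctures so that the relevant sphere is always small. Getting this puncture-counting exactly right, and verifying that the $\bdry$-compressions in the arc step preserve essentiality, is where the real work lies.
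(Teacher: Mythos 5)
Your overall strategy (minimal position, innermost circles, outermost arcs, then read off (1)--(3) from disjoint essential curves on the four-punctured sphere) is the same as the paper's, but there is a genuine gap at exactly the point you flag as delicate, and your proposed fix does not work. The problematic circles of $D_1 \cap D_2$ are those bounding a twice-punctured subdisk in \emph{both} $D_1$ and $D_2$. Since each $D_i$ carries at most two punctures, such a subdisk contains \emph{all} of the punctures of its disk, so both circles are parallel to $\bdry D_1$ in $D_1$ minus its punctures and to $\bdry D_2$ in $D_2$ minus its punctures; there is no ``side carrying fewer punctures'' to pass to, and the sphere $E \cup E'$ is a genuine Conway sphere that the no-local-knotting hypothesis cannot kill. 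The paper's resolution is different in kind from an innermost-disk cancellation: it takes the circle $C$ of $D_1\cap D_2$ that is \emph{outermost} in $D_1$ (hence also outermost in $D_2$), so that $C$ cuts off \emph{unpunctured} annuli $A_1 \subset D_1$ and $A_2 \subset D_2$; then $A_1 \cup A_2$ is a properly embedded annulus in the complement of the strands, and because the strands of a $2$--strand tangle have no closed components this annulus must be boundary-parallel, and the parallelism guides an isotopy of $D_1$ reducing $|D_1 \cap D_2|$. Without this annulus-swap (or some substitute), your induction does not terminate and (3) is not established.

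Two smaller points. First, your arc step asserts that $\bdry$-compressing along an outermost subdisk ``keeps the result essential''; the paper's analysis shows the opposite is what actually happens --- in each configuration the compression produces once-punctured or boundary-parallel pieces, contradicting essentiality of $D_2$ or minimality --- and it needs the preliminary observation that the arc $\epsilon = \bdry E \setminus \bdry D_1$ must separate the two punctures of the complementary disk $F \subset \bdry T$. You should either carry out that case analysis or at least verify that the compressed disks are essential and isotopic to the original, which is false in general here. Second, for (2) your parity argument only shows the puncture counts agree mod $2$; to rule out one disk being unpunctured and the other twice-punctured you need the paper's observation that the region between two disjoint disks with isotopic boundaries would then meet the strands in a single unknotted arc with both endpoints on the twice-punctured disk, rendering that disk inessential. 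Your final paragraph on the rational-tangle conclusion is fine.
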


\begin{proof}
Out of all such pairs of disks isotopic to $D_1$ and $D_2$, we may assume that $D_1$ and $D_2$ were chosen so as to minimize $|D_1 \cap D_2|$. 

Suppose that there is a circle of intersection in $D_1 \cap D_2$. Let $\zeta$ be an innermost such circle on $D_1$, bounding an innermost disc $E_1 \subset D_1$. Let $E_2 \subset D_2$ be the disc with boundary $\zeta$. 
 Then $E_1 \cup E_2$ is an embedded sphere in $T$.
 Since every sphere in $T$ separates, the parity of the punctures on $E_1$ and $E_2$ is the same. Hence, either they both have a single puncture or they both have $0$ or $2$ punctures. 
 If $E_1$ has $2$ punctures but $E_2$ has $0$, 
    then the sphere $E_1 \cup E_2$ bounds a ball that intersects the strands of $T$ in a single unknotted arc (since $T$ has no local knotting).  This arc may be isotoped rel-$\bdry$ through the ball to an arc in $E_1$ with the same endpoints.  In particular this indicates that there is another disk in this ball disjoint from the unknotted arc and meeting $D_1$ only in $\zeta$.  Hence this other disk is a compressing disk for $D_1$, contradicting that $D_1$ is an essential disk.
If $E_1$ has $0$ punctures but $E_2$ has $2$, then $E_1$ is a compressing disk for $D_2$, contradicting that $D_2$ is an essential disk.
If $E_1$ and $E_2$ both have $0$ punctures or both have $1$ puncture, then there is an isotopy of $E_2$ to $E_1$ through the ball they cobound due to the lack of local knotting and closed components. Pushing $E_2$ slightly past $E_1$ then achieves an isotopy of $D_2$ that reduces $|D_1 \cap D_2|$ contrary to the assumed minimality.
Thus we are left with in the situation that any simple closed curve of the intersection $D_1 \cap D_2$ must bound a disk in each $D_1$ and $D_2$ that is punctured twice by the strands of $T$.  In other words, any simple closed curve of $D_1 \cap D_2$ must bound an unpunctured annulus with $\bdry D_1$ in $D_1$ and another with $\bdry D_2$ in $D_2$.  Let us set this last case aside for the moment.

\medskip

So now assume $\bdry D_1 \cap \bdry D_2 \neq \emptyset$ so that $D_1 \cap D_2$ necessarily contains arcs.
Each arc of $D_1 \cap D_2$ cuts each $D_1$ and $D_2$ into two subdisks. For each $D_1$ and $D_2$, at least one of these two subdisks  is intersected by the strands of $T$ at most once and therefore contains no simple closed curve component of $D_1 \cap D_2$.  
So let $\zeta$ be an arc component of $D_1 \cap D_2$ that is outermost in $D_1$, say, cutting off a disk $E_1$ that intersects $D_2$ only in the arc $\zeta$.
Let $\epsilon$ be the arc $\bdry E_1 \cut \zeta$.  Note that $\epsilon$ lies in $\bdry T$  disjoint from the markings and connecting two points in $\bdry D_2$.  Since $D_2$ is essential in the complement of the strands of $T$, $\bdry D_2$ separates $\bdry T$ into two disks, each with two punctures.  Let $F$ be the component of $\bdry T \cut \bdry D_2$ that contains $\epsilon$. If $\epsilon$ is inessential in $F$, then an isotopy reduces $|\bdry D_1 \cap \bdry D_2|$ taking $\zeta$ to a closed loop $\zeta'$ that bounds a disk $E_1'$ intersecting the strands of $T$ at most once.  Yet as we have seen, such loops imply that either a disk is not essential or there is an isotopy contradicting the minimality of $|D_1 \cap D_2|$. Thus, $\epsilon$ separates the punctures of $F$.  

Now suppose $E_1$ is disjoint from the strands of $T$ and use $E_1$ to $\bdry$--compress $D_2$ into two disks $\Delta$ and $\Delta'$.  
Each puncture of $\Delta \cup \Delta'$ is also a puncture of $D_2$, as $E_1$ contains no punctures. As $\epsilon$ separates the punctures of $F$ and as $E_1$ is unpunctured, this implies that $\Delta$ and $\Delta'$ are each once-punctured by the strands of $T$.  As $F$ is a twice-punctured disc, each of $\bdry \Delta$ and $\bdry \Delta'$ bounds a disc in $F$ containing a single puncture of $F$. Since no strand of $T$ contains a local knot, the discs $\Delta$ and $\Delta'$ are inessential, being isotopic rel-$\bdry$ into $F$.  Since $D_2$ may be recovered by banding $\Delta$ and $\Delta'$ together along an arc in $F$ dual to $\epsilon$, it follows that $D_2$ is isotopic rel-$\bdry$ to the twice punctured disk $F$ in $\bdry T$.  However this contradicts that $D_2$ is essential. 

Thus it follows that every arc component of $D_1 \cap D_2$ is an arc separating the punctures in both $D_1$ and $D_2$.  In particular, if there are are arc components of $D_1 \cap D_2$, then both $D_1$ and $D_2$ are twice punctured, the arcs separate their punctures, and there are no simple closed curves of $D_1 \cap D_2$.

So now let $\zeta$ be an arc of $D_1 \cap D_2$ that is outermost in $D_1$, bounding a disk $E_1 \subset D_1$ that intersects $D_2$ only in $\zeta$ and is punctured once by the strands of $T$.  Use $E_1$ to $\bdry$--compress $D_2$ to disks $\Delta$ and $\Delta'$ which are each twice punctured.  Again let $\epsilon = \bdry E_1 \cut \zeta$ and let $F$ be the disk in $\bdry T$ bounded by $\bdry D_2$ that contains $\epsilon$.  As before, $\epsilon$ must separate the punctures of $F$.  However, this now implies that $\bdry \Delta$ bounds a disk in $F$ that is once-punctured by the strands of $T$.  Yet since $\Delta$ is homologous through the tangle to this disk, the strands of $T$ must also intersect $\Delta$ an odd number of times, a contradiction.

\medskip

Finally, we may return to the case that $D_1 \cap D_2$ contains loops parallel to $\bdry D_1$ and $\bdry D_2$.  However now we may assume there are no arcs of intersection.  That is, we may assume $\bdry D_1 \cap \bdry D_2 = \emptyset$.

Since the boundary of $T$ is a sphere with four punctures, if $|\bdry D_1 \cap \bdry D_2| = 0$, then $\bdry D_1$ is isotopic to $\bdry D_2$ as desired.  Then if one is disjoint from the strands of $T$ it follows that the other must be as well since they are essential disks.  Hence either both disks are disjoint from the strands of $T$ or both are intersected twice.  Since $T$ has no local knotting, if the disks are disjoint from the strands, then $T$ must be a rational tangle and so the two disks are isotopic.   

So suppose the disks both intersect the strands twice.  As we are presently assuming $|\bdry D_1 \cap \bdry D_2| = 0$, suppose $D_1 \cap D_2$ is a non-empty collection of closed curves.  First consider an innermost disk $E$ of $D_1 \cup D_2$ in the complement of the  intersection $D_1 \cap D_2$.  Because the disks are essential, $E$ must be punctured by the strands of $T$.  Because there is no local knotting in the strands of $T$, $E$ must be punctured twice.  Hence the curves of $D_1 \cap D_2$ are all parallel to $\bdry D_1$ in $D_1 - T$ and also parallel to $\bdry D_2$ in $D_2 -T$.  Observe that a curve $C$ of $D_1 \cap D_2$ outermost in $D_1$ must also be outermost in $D_2$.  So $C$ cuts off annuli $A_1$ from $D_1$ and  $A_2$ from $D_2$ where $C = A_1 \cap A_2$. Then since there are no closed components of the strands of $T$, the annulus $A_1 \cup A_2$ is boundary parallel and guides a proper isotopy of $D_1$ that reduces $|D_1 \cap D_2|$, contrary to assumption.  Hence, when isotoped to intersect minimally, $D_1 \cap D_2 = \emptyset$.
\end{proof}

The following is a standard result concerning rational tangles, although we do not know of a source for the proof. It can also be proved using branched double covers.

\begin{cor}[cf. {\cite[page 203]{KauffmanLambropoulou}}]
\label{cor:integral sum}
A marked tangle that is a tangle sum is integral if and only if each summand is integral.
\end{cor}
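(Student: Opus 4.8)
The plan is to prove both implications, with the reverse direction routine and the forward direction carrying all of the content. For the reverse implication, if $T_1 = [m]$ and $T_2 = [n]$ are integral then $T_1 + T_2 = [m] + [n] = [m+n]$ is integral; this is immediate from Conway's calculus, since horizontal juxtaposition of integer (horizontal twist) tangles adds the number of twists. The forward implication is where Lemma~\ref{lem:lowpunct disks} does the work. The key reformulation I will use is that a rational tangle is integral precisely when the boundary of its splitting disk meets the slope-$\infty$ curve of the boundary four-punctured sphere in exactly two points (equivalently, their slopes are at distance one): for a rational tangle of fraction $p/q$, the splitting slope meets the slope-$\infty$ curve in $2|q|$ points, which equals $2$ exactly when $q = \pm 1$.

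For the forward implication, suppose $T = T_1 + T_2$ is integral. Then $T$ is rational, so it has no local knotting and possesses an essential, unpunctured splitting disk $D_s$ whose boundary realizes the splitting slope. Let $D_+$ denote the summing disk of the tangle sum; it is a twice-punctured disk whose boundary is the slope-$\infty$ curve of $\partial T$. First I would observe that $D_+$ is inessential: were it essential, then $D_s$ and $D_+$ would be two essential disks with different numbers of punctures, contradicting Lemma~\ref{lem:lowpunct disks}(2). Next I would isotope $D_s$ to meet $D_+$ minimally, removing circles of $D_s \cap D_+$ by the same innermost-disk arguments used in the proof of Lemma~\ref{lem:lowpunct disks} (exploiting that $T$ has no local knotting and no closed components). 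Since $T$ is integral, $\partial D_s$ and $\partial D_+$ are at distance one and so meet in exactly two points; hence after removing circles, $D_s \cap D_+$ is a single spanning arc $b$.

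This arc cuts the unpunctured disk $D_s$ into two unpunctured disks $D_s^W \subset T_1$ and $D_s^E \subset T_2$. Because $D_s$ separates the two strands of $T$, each of $D_s^W$ and $D_s^E$ separates the two strands of the corresponding summand, so each is a splitting disk and both $T_1$ and $T_2$ are rational. It remains to see that they are integral. Cutting $T$ along $D_+$, the boundary $\partial T_1$ is the union of the western punctured disk of $\partial T$ with $D_+$, and under this identification $\partial D_+$ is exactly the slope-$\infty$ curve of $\partial T_1$. The boundary of the splitting disk $D_s^W$ is the union of an arc of $\partial D_s$ lying in the western disk with the arc $b \subset D_+$, so $\partial D_s^W$ meets $\partial D_+$ in precisely the two endpoints of $b$. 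By the reformulation above this says $T_1$ is integral, and symmetrically $T_2$ is integral.

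The step I expect to be the main obstacle is the slope bookkeeping across the three four-punctured spheres $\partial T$, $\partial T_1$, and $\partial T_2$: one must check that in minimal position the relevant boundary arcs are essential in their complementary punctured disks, so that the two intersection points between $\partial D_s^W$ and $\partial D_+$ are genuinely minimal (and hence really certify distance one, i.e.\ integrality) rather than being removable by an isotopy. The innermost-circle removal and the verification that $D_s^W$ and $D_s^E$ are essential, rather than boundary-parallel, splitting disks are routine adaptations of the arguments in Lemma~\ref{lem:lowpunct disks}, but they must be carried out to make the single-arc conclusion rigorous. As the authors note, an alternative route passes to the double branched cover, where $T$ integral makes the cover a solid torus, the summing disk lifts to an annulus cutting it into the covers of $T_1$ and $T_2$, and integrality becomes the statement that this annulus meets a meridian in the minimal way.
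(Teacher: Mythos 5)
Your argument is correct in outline but takes a genuinely different route from the paper's. After both proofs use Lemma~\ref{lem:lowpunct disks} to see that the summing disk of an integral tangle cannot be essential, they diverge: the paper shows the summing disk is therefore compressible or $\bdry$--parallel, rules out compressibility by a second application of the lemma, and in the $\bdry$--parallel case concludes that one summand, say $T_2$, is integral and then gets the other by the purely algebraic observation that $T+\overline{T_2}=T_1+T_2+\overline{T_2}=T_1$ exhibits $T_1$ as a sum of integral tangles. This sidesteps entirely the slope bookkeeping that you correctly identify as the delicate point of your approach. Your route instead cuts the unpunctured splitting disk $D_s$ of $T$ along the summing disk $D_+$ and certifies integrality of each summand by an intersection count; this is more hands-on and arguably more illuminating, but it obliges you to verify minimality of intersections on three different four-punctured spheres. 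The obstacle you flag does resolve affirmatively, and here is the missing sentence: since $T$ is integral, each strand runs from a western to an eastern marked point and hence punctures $D_+$ exactly once; because $D_s$ separates the two strands, the arc $b=D_s\cap D_+$ therefore separates the two punctures of $D_+$, so $b$ is essential in the twice-punctured disk $D_+$ and the two points of $\bdry D_{s}^{W}\cap\bdry D_+$ cannot be removed by isotopy (if they could, $T_1$ would be a rational tangle with splitting slope $\infty$, i.e.\ the $\infty$--tangle, and then the slope-$\infty$ unpunctured disk would contradict Lemma~\ref{lem:lowpunct disks} applied to $T$). The same separation argument disposes of the innermost circles of $D_s\cap D_+$, where you cannot quote the lemma's proof verbatim because $D_+$ is not essential: an innermost circle bounding a twice-punctured disk in $D_+$ would compress $D_+$ to an unpunctured disk of slope $\infty$, which is impossible in an integral tangle. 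With those details inserted your proof is complete; what the paper's mirror trick buys is brevity, and what yours buys is an explicit construction of the splitting disks of the summands.
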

\begin{proof}
It follows immediately from the definition that the tangle sum of integral tangles is integral. So suppose that $T$ is an integral tangle that decomposes as the sum $T = T_1 + T_2$. 
As an integral tangle, $T$ is a rational tangle.  Thus $T$ has no local knotting and no closed components. Furthermore being rational implies it has an essential disk $D_1$ disjoint from the strands of $T$ and separating them.  As a tangle sum, $T$ contains a disk  $D_2$ that splits $T$ into the tangles $T_1$ and $T_2$, intersecting the strands of $T$ twice. Observe that by the definitions of integral tangle and tangle sum, $\bdry D_1$ and $\bdry D_2$ are not isotopic in $\bdry T$.
 Lemma~\ref{lem:lowpunct disks} therefore implies that $D_2$ cannot be an essential disk.  Thus $D_2$ is either compressible or $\bdry$--parallel.

If $D_2$ were compressible, say by a compressing disk $E$ in $T_2$, then the result of the compression would bound a disk $D_2'$ in $T_2$ that is disjoint from the strands of $T_2$ and separates them.  Hence it must also separate the strands of $T$. Thus $D_2'$ is also an essential disk for $T$.  However as $\bdry D_2'$ is isotopic to $\bdry D_2$, it is not isotopic to $\bdry D_1$.  This contradicts Lemma~\ref{lem:lowpunct disks}.

Therefore $D_2$ must be $\bdry$--parallel.
 Thus either $T_1$ or $T_2$, say $T_2$, is an integral tangle through which the parallelism occurs.  Yet then taking the tangle sum of $T$ with the mirror $\overline{T_2}$ of $T_2$ would yield $T+\overline{T_2} = T_1 + T_2 + \overline{T_2} = T_1$. (Note that $T_2 + \overline{T_2}$ is the $0$--tangle precisely because $T_2$ is an integral tangle.) As $T_1$ is now expressed as the sum of two integral tangles, it too is integral.
\end{proof}

\begin{cor}
\label{cor:unique summing slope}
Suppose the marked tangle $T$ has no local knotting or closed components and decomposes as a tangle sum along an essential summing disk. 
Then any other essential twice-punctured disk in $T$ has boundary with slope $\infty$, the slope of the boundary of the summing disk.
\end{cor}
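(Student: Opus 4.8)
The plan is to deduce this immediately from Lemma~\ref{lem:lowpunct disks}, so that the entire content of the corollary reduces to a convention check together with one application of that lemma. Write $T = T_1 + T_2$ for the given essential tangle sum and let $D$ be the summing disk. First I would verify that $D$ satisfies the hypotheses of Lemma~\ref{lem:lowpunct disks}. It is essential by assumption. It is twice-punctured because, in the formation of the tangle sum, precisely the two gluing points (where the eastern endpoints of $T_1$ are identified with the western endpoints of $T_2$) lie in the interior of the glued half-sphere, so the strands of $T$ cross the interior of $D$ in exactly two points. Since $T$ is a marked two-string tangle with no closed components and no local knotting, the standing hypotheses of the lemma on the ambient tangle are met.

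Next I would bring in an arbitrary other essential twice-punctured disk $D'$ and apply the lemma to the pair $\{D, D'\}$. Both disks are essential, and each meets the strands of $T$ in exactly two interior points, so the lemma applies (its hypothesis permits zero or two punctures, and here both have two). Conclusion (1) of Lemma~\ref{lem:lowpunct disks} then yields that $\partial D$ and $\partial D'$ are isotopic as simple closed curves in the four-punctured sphere $\partial T$. Since the slope of an essential twice-punctured disk is by definition the isotopy class of its boundary curve in $\partial T$, it follows that $D$ and $D'$ carry the same boundary slope.

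It remains only to identify that common slope as $\infty$. By the conventions for the tangle sum, $D$ is the image of the glued half-spheres, so $\partial D$ is the curve separating the two western marked points $\{NW, SW\}$ from the two eastern marked points $\{NE, SE\}$; by our marking conventions (Figure~\ref{fig:rational tangles}) this is precisely the slope-$\infty$ curve on $\partial T$. Hence $\partial D$ has slope $\infty$, and therefore so does $\partial D'$.

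The proof is short precisely because Lemma~\ref{lem:lowpunct disks} performs all of the genuinely topological work of arranging and comparing the disks. The only points requiring care — and I do not expect either to be a real obstacle — are confirming that the summing disk is genuinely twice-punctured, so that the lemma applies on the nose rather than in the unpunctured case, and pinning down the marking conventions so that the summing slope is unambiguously $\infty$ rather than some other fixed value. Both should be stated explicitly so that the slope identification at the end is clean.
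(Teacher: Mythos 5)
Your argument is exactly the paper's: both proofs note that the summing disk and the other disk are essential twice-punctured disks, apply conclusion (1) of Lemma~\ref{lem:lowpunct disks} to conclude their boundaries are isotopic in the four-punctured sphere $\bdry T$, and identify the common slope as $\infty$ by the marking convention. The extra care you take in verifying that the summing disk is twice-punctured and in pinning down the convention is fine but does not change the route.
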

\begin{proof}
Let $D_1$ be the disc defining the tangle sum and let $D_2$ be any other essential twice-punctured disk. Since both are twice-punctured essential disks, then by Lemma \ref{lem:lowpunct disks} their boundaries are isotopic in the complement of the marked points.
\end{proof}

\section{Proof of Theorem \ref{main theorem} and Theorem \ref{thm:propermain}}

\subsection{Case 1: When $\thetan$ is locally knotted.}

\begin{prop}\label{prop:unknottinglocalknot}
If $\thetan$ is a locally knotted $\theta$--curve with $\uprop(\thetan)=1$, then:
\begin{itemize}
    \item $\thetan = K \#_2 \thetan_0$ where $K$ is a non-trivial prime knot with $\uprop(K)=1$ and $\thetan_0$ is a trivial $\theta$--curve; and
    \item any proper rational unknotting arc for $\thetan$ can be isotoped to be disjoint from the summing sphere.
\end{itemize}
\end{prop}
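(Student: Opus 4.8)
The plan is to fix an essential summing sphere recording the local knotting, to show that every proper rational unknotting arc can be pushed off this sphere, and then to localize the tangle replacement inside the knotted summand, where the knot version of the theorem applies. Since $\theta$ is locally knotted, I would first fix an essential summing sphere $S$ with $|S\cap\theta|=2$ bounding a ball $B$ for which $(B,\theta\cap B)$ is a knotted one-strand tangle $\kappa$; capping off the two sides writes $\theta = K \#_2 \theta'$, where $K$ is the nontrivial knot obtained from $(B,\kappa)$ and $\theta'$ is the complementary $\theta$-curve. Let $\alpha$ be a proper rational unknotting arc, so that the proper RTR along $\alpha$ produces the trivial $\theta$-curve $\theta_0$. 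The entire argument reduces to the claim that $\alpha$ can be isotoped to be disjoint from $S$; note that this claim is itself precisely the second bullet of the conclusion.

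Granting that claim, the localization step is short. If $\alpha$ were on the $\theta'$ side $S^3\setminus B$, then the RTR would leave $(B,\kappa)$ unchanged, so $\theta_0$ would still contain the knotted arc $\kappa$ and hence be locally knotted, contradicting triviality of $\theta_0$. Therefore $\alpha\subset B$, and the RTR alters only the knotted summand, replacing $(B,\kappa)$ by some $(B,\kappa')$ with capped-off knot $K'$; the persisting sphere $S$ then exhibits $\theta_0 = K'\#_2\theta'$. Because the trivial $\theta$-curve admits no essential summing sphere, both $K'$ and $\theta'$ must be trivial, whence $\theta'=\theta_0$ and the RTR is an unknotting proper rational tangle replacement on $K$, giving $\uprop(K)=1$. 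Finally, since $K$ is nontrivial with $\uprop(K)=1$, the knot case (\cite{properrationalunknotting}, and \cite{Scharlemann:Unknotting} for an ordinary crossing change) forces $K$ to be prime, yielding both conclusions.

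The hard part will be proving the claim that $\alpha$ misses $S$. I would isotope $\alpha$ (rel $\theta$) and $S$ (rel $\theta$) to minimize the number of intersection points, equivalently minimizing $|S\cap\partial B_\alpha|$ for the crossing ball $B_\alpha=\nbhd(\alpha)$, where $\partial B_\alpha$ meets $\theta$ in the four points of the trivial $\infty$-tangle $\theta\cap B_\alpha$. The curves of intersection between the twice-punctured sphere $S^\ast = S\setminus\nbhd(\theta)$ and the four-punctured sphere $\partial B_\alpha\setminus\nbhd(\theta)$ are then analyzed by an innermost-circle argument of exactly the type carried out in Lemma~\ref{lem:lowpunct disks}: using incompressibility coming from essentiality of $S$ together with the absence of local knotting (away from the summand recorded by $S$) and the triviality of the $\infty$-tangle, each innermost disk of intersection is shown to bound either a compression contradicting essentiality or a ball across which the intersection can be removed. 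Once all circles are eliminated, $B_\alpha$ lies in $B$ or in $B'$ — it cannot contain $B$, since a knotted arc cannot sit inside the unknotted strands of a trivial tangle — and $\alpha\cap S=\emptyset$ follows. I expect the genuinely delicate point to be ensuring that the innermost/outermost reductions truly decrease the count rather than merely trading intersections between the two sides of $S$; this is exactly where properness of the RTR is needed, guaranteeing that the relevant subarcs are removable, and where a uniqueness statement for the summing slope in the style of Corollary~\ref{cor:unique summing slope} controls the configurations that can occur.
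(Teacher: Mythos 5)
The reduction to ``$\alpha$ can be isotoped off the summing sphere'' and the subsequent localization step are correct and match the endgame of the paper's argument (including the appeal to primeness of knots with $\uprop = 1$). The genuine gap is in your proposed proof of that reduction, which is the entire content of the proposition. An innermost-circle argument on $S \cap \bdry B_\alpha$ in the style of Lemma~\ref{lem:lowpunct disks} cannot eliminate all intersections: that lemma works because both disks there are essential (in particular incompressible) twice-punctured disks, whereas here $\bdry B_\alpha$ minus its four punctures is a four-punctured sphere bounding a trivial tangle, hence compressible, and $S$ minus its two punctures is an annulus whose core circles can meet $\bdry B_\alpha$ in curves that are \emph{essential on both surfaces}. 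Such curves are invisible to innermost/outermost reductions and cannot be removed by isotopy alone; whether they can be removed depends on the global hypothesis that the RTR along $\alpha$ unknots $\theta$, not on local surface combinatorics. A sanity check: if your argument worked, the same argument applied to a composite knot with a crossing change to the unknot would immediately reprove Scharlemann's theorem that unknotting number one knots are prime, which is known to require substantially deeper input. Your remark that ``properness of the RTR is needed, guaranteeing that the relevant subarcs are removable'' is precisely where the missing content lives, and no mechanism is supplied.

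The paper takes a different and unavoidable route at this point: it first shows the edge $e$ of $\theta$ disjoint from $\alpha$ carries no local knot, forms the tangle $T$ exterior to $e$, and passes to the double branched cover, where the proper RTR becomes a non-integral Dehn surgery turning a reducible manifold into a solid torus. Scharlemann's theorem on producing reducible manifolds by surgery \cite{Scharlemann:Producing} then forces the knot exterior in the cover to be reducible, and the Equivariant Sphere Theorem \cite{MSY} produces an equivariant reducing sphere whose quotient is a summing sphere for $\theta$ \emph{disjoint from $\alpha$}. Only afterwards does one argue $\alpha$ lies inside the resulting local-knot ball and that the summing sphere is unique up to isotopy (via primeness of $\uprop=1$ knots), so that the arbitrary sphere $S$ you fixed at the outset is isotopic to the one just constructed. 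To repair your proof you would need to replace the innermost-disk claim with this covering-space/Dehn-surgery input or an equivalent.
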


\begin{proof}
Assume $\thetan$ is locally knotted.
Let $\alpha$ be a proper rational unknotting arc for $\thetan$.  Then the endpoints of $\alpha$ lie on either one or two edges of $\thetan$.  Let $e$ be an edge of $\thetan$ that does not meet $\alpha$ and let $e_1$ be another edge.  Suppose that  $\thetan$ has a local knot $K$ in the edge $e$.   Without loss of generality $\alpha$ meets the knot $C=e\cup e_1$ at most once, along the edge $e_1$.  Note that since $K$ is in $e$, the knot $C$ is nontrivial.  After a proper RTR along $\alpha$, the knot $C$ is unchanged.  Thus, $\uprop(\thetan)\neq 1$.  This is a contradiction.  So $\thetan$ has no local knotting in the edge $e$. 

\smallskip

Consider the tangle $T = (B, t)$ that is the exterior of the edge $e$.  Since $\alpha$ is disjoint from $e$, the tangle $T$ contains $\alpha$. Let $M'$ be the double branched cover of $B$, branched over $t$. The arc $\alpha$ lifts to a knot $\tilde{\alpha}$ in $M'$. By assumption, performing the proper RTR on $t$ using $\alpha$ converts $T$ into a trivial tangle. Let $M$ be the double branched cover of $B$ with branch set the strands of this trivial tangle. Observe that $M$ is a solid torus. Let $\tilde{\alpha}^*$ be the lift of the dual arc to $\alpha$; it is a knot in $M$. The proper RTR along $\alpha$ induces a non-integral Dehn surgery on $\tilde{\alpha}$ converting $M'$ to $M$ and with $\tilde{\alpha}^*$ being the core of the surgery solid torus. It is non-integral in the sense that on the torus that is the boundary of a regular neighborhood of $\tilde{\alpha}$ the slope of the meridian for $\tilde{\alpha}$ and the slope of the meridian for $\tilde{\alpha}^*$ intersect minimally at least twice. Scharlemann proved\footnote{See the comment at the end of the paper for statement (e), although there is a typo: the first $M'$ in the comment should be an $M$.}:

\begin{theorem}[Abridged main theorem of  \cite{Scharlemann:Producing}]\label{thm:scharlemann}
Let $k$ be a knot in a compact orientable $3$--manifold $M$ whose exterior is irreducible and $\bdry$--irreducible.   Let $M'$ be a manifold obtained by Dehn surgery on $k$.
If $\bdry M$ compresses in $M$ or $M$ contains a sphere not bounding a rational homology ball, then either
\begin{enumerate}
    \item[](a),(b),(d) $M'$ is irreducible,
    \item[](c) $k$ is cabled and the slope of the surgery is that of the cabling annulus, or
    \item[](e) $M = (S^1 \times S^2) \# W$ where $W$ is a rational homology sphere.
\end{enumerate} 
\qed
\end{theorem}

Each summing sphere for $T$ lifts to an essential sphere in $M'$, so $M'$ is reducible.  Thus, neither (a), (b), nor (d) can occur. Option (c) implies that $\tilde{\alpha}$ is a cable knot and that the surgery slope is the slope of the cabling annulus. Such a slope intersects the meridian of $\tilde{\alpha}$ exactly once, rather than at least twice. Consequently, Option (c) is impossible. Option (e) is impossible since $M$ is a solid torus.


Since the conclusions of Scharlemann's Theorem~\ref{thm:scharlemann} fail, some hypothesis must be wrong.  In particular, the exterior of $K'$ (the lift of $\alpha$) must be either reducible or $\bdry$--reducible.  Since the boundary of the exterior of $K'$ is two tori, a compression of one yields a sphere that doesn't bound a ball.  Hence if the exterior of $K'$ is $\bdry$--reducible, then it is reducible as well.  Thus we assume the exterior of $K'$ is reducible.  Again by the Equivariant Sphere Theorem \cite{MSY} we may assume there is a reducing sphere $\tilde{S}$ such that either (i) $\tau(\tilde{S}) \cap \tilde{S} = \emptyset$ or (ii) $\tau(\tilde{S}) = \tilde{S}$ where $\tilde{S}$ is transverse to the fixed set of $\tau$.

In (i), $\tilde{S}$ is disjoint from the fixed set of $\tau$ and projects to a sphere $S'$ disjoint from the strands $t$ of the tangle $T$.  Then $S'$ bounds a $3$--ball $B'$.  Since $t$ has no closed components, $B' \cap t = \emptyset$.  Therefore $\tilde{S}$ bounds a ball, a contradiction.

In (ii), $\tilde{S}$ projects to a sphere $S'$ that intersects the strands $t$ of the tangle $T$ twice but is disjoint from $\alpha$.   As $S'$ must bound a $3$--ball $B'$ in $B$ and $t$ has no closed components, $B' \cap t$ is a single arc.  This arc must be non-trivial since otherwise $\tilde{S}$ will bound a ball.  Observe that this implies $S'$ is a summing sphere for $\thetan$.

Since $\alpha$ is disjoint from $S'$, either $\alpha \subset B'$ or $\alpha$ is disjoint from $B'$.  If $\alpha$ is disjoint from  $B'$, then after the proper RTR along $\alpha$, $B'$ still meets $T$ in a non-trivial knotted arc.  This is a contradiction.  Hence $\alpha \subset B'$.  Then the proper RTR along $\alpha$ must unknot the arc $B' \cap t$. This implies that $\thetan$ is the connected sum of a $\theta$-curve $\thetan_1$ and a proper rational unknotting number one knot $\kappa$. The arc $B' \cap t$ is the result of removing an arc from $\kappa$. 
The argument of \cite{Zhangsonepagepaper} that knots with unknotting number 1 are prime (cf. \cite{Scharlemann:Unknotting}) immediately generalizes to show that knots with proper rational unknotting number 1 are prime.
Since $\alpha$ is a proper rational unknotting arc for $\thetan$, the curve $\thetan_1$ must be the trivial $\theta$-curve. Consequently, $S'$ is, up to isotopy, the unique summing sphere for $\thetan$; in particular, $S$ is isotopic to $S'$, as desired.
\end{proof}

\subsection{Case 2: When $\thetan$ is a vertex sum and not locally knotted.}

Throughout this subsection we assume our $\thetan$--curves and tangles are not locally knotted.

\begin{prop}\label{prop:unknottingvertexsum}
Assume $\thetan$ is a vertex connected sum of two non-trivial theta curves.  Further assume $\thetan$ has no local knots. Then $\uprop(\thetan)>1$.
\end{prop}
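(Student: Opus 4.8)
The plan is to pass to the exterior of a well-chosen edge and invoke Proposition~\ref{prop:properrationalunknottingtanglesum}. Suppose for contradiction that $\uprop(\thetan)=1$, realized by a proper rational unknotting arc $\alpha$. Since $\alpha$ meets $\thetan$ only in its two endpoints, and those endpoints avoid the vertices, they lie on at most two of the three edges of $\thetan$. Hence some edge $e$ satisfies $\partial\alpha\cap e=\emptyset$; as the interior of $\alpha$ is disjoint from $\thetan$, in fact $\alpha\cap e=\emptyset$.

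First I would build the tangle decomposition. Let $S$ be the summing sphere of the vertex sum, meeting $\thetan$ transversely in three points, one per edge. Remove an open regular neighborhood of $e$, small enough to be disjoint from $\alpha$. The result is a $3$--ball meeting $\thetan$ in the other two edges, i.e.\ a $2$--strand tangle $T$, which is not locally knotted because $\thetan$ is not. Since $S$ meets the neighborhood of $e$ in a single disk, $S$ restricts to a disk $D\subset T$ meeting the two strands of $T$ in two points, and $D$ separates $T$ into the two sides coming from $\thetan_1$ and $\thetan_2$. Thus $D$ presents $T$ as a marked tangle sum $T=T_1+T_2$.

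The crux is to show that neither summand $T_i$ is integral. I would establish the contrapositive statement that a \emph{rational} edge exterior forces the corresponding $\theta$--summand to be trivial; here the no-local-knotting hypothesis is essential, since otherwise tying a knot in the removed edge would yield a nontrivial summand with rational edge exterior. Granting this, because $\thetan=\thetan_1\#_3\thetan_2$ is a sum of nontrivial theta curves and is not locally knotted, neither $T_1$ nor $T_2$ is rational, and in particular neither is integral. (One could phrase part of this through Corollary~\ref{cor:integral sum}, but that controls the two summands only jointly, whereas Proposition~\ref{prop:properrationalunknottingtanglesum} requires each to be non-integral.) Because $\alpha$ is disjoint from $e$ and $\partial\alpha$ lies on the two strands of $T$, the proper rational tangle replacement along $\alpha$ is a proper rational tangle replacement of the tangle $T$ in the sense of Definition~\ref{defn:CCRTR}. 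After this replacement $\thetan$ is trivial, so $T$ becomes the exterior of $e$ in a trivial $\theta$--curve, which—with the marking inherited from the sum—is an integral tangle. Thus the tangle sum $T=T_1+T_2$, with neither summand integral, would admit a proper rational tangle replacement to an integral tangle, contradicting Proposition~\ref{prop:properrationalunknottingtanglesum}. Therefore $\uprop(\thetan)>1$.

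I expect the main obstacle to be the interface between the $\theta$--curve and tangle languages: verifying that integrality of a summand $T_i$ corresponds exactly to triviality of $\thetan_i$, and that the unknotted target is integral (not merely rational) with respect to the \emph{same} marking used to define the sum. The genuinely topological steps—that an edge exterior is a two-strand tangle, that $S$ cuts it into a tangle sum along $D$, and that $\alpha$ survives as a proper rational tangle replacement of that tangle—are routine; the care lies in the marking conventions and in the decisive role of the no-local-knotting hypothesis, which is exactly what distinguishes this case from the locally knotted one treated in Proposition~\ref{prop:unknottinglocalknot}.
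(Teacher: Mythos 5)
Your proposal is correct and follows essentially the same route as the paper: choose an edge $e$ missed by $\alpha$, pass to the exterior tangle $T$ cut by the summing sphere into $T=T_1+T_2$, and derive a contradiction with Proposition~\ref{prop:properrationalunknottingtanglesum}. The only (minor) divergence is in showing neither summand is integral—the paper deduces this directly from essentiality of the summing disk $D$ coming from the essential sphere $S$, whereas you route it through an auxiliary claim that a rational edge exterior forces the corresponding $\theta$--summand to be trivial; both work, and the paper's version is slightly more economical.
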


\begin{proof}
For contradiction, we assume $\uprop(\thetan)=1$ and $\alpha$ is 
the arc of a proper rational tangle replacement that unknots $\thetan$. 
Then $\alpha$ will meet $\thetan$ in one or two edges.  As before, let $e$ be an edge of $\thetan$ that does not meet $\alpha$.
By hypothesis, $\thetan$ has an essential summing sphere $S$ meeting $\thetan$ in three points but there is no essential sphere meeting $\thetan$ in just two points.
For ease of exposition, we choose a fixed diagram of $\thetan$ and a fixed regular neighborhood $N(e)$ of $e$ such that near the diagram $S$ intersects the plane of projection as in Figure~\ref{1-EdgeEandS}.

\begin{figure}[ht!]
\includegraphics[width=.75\textwidth]{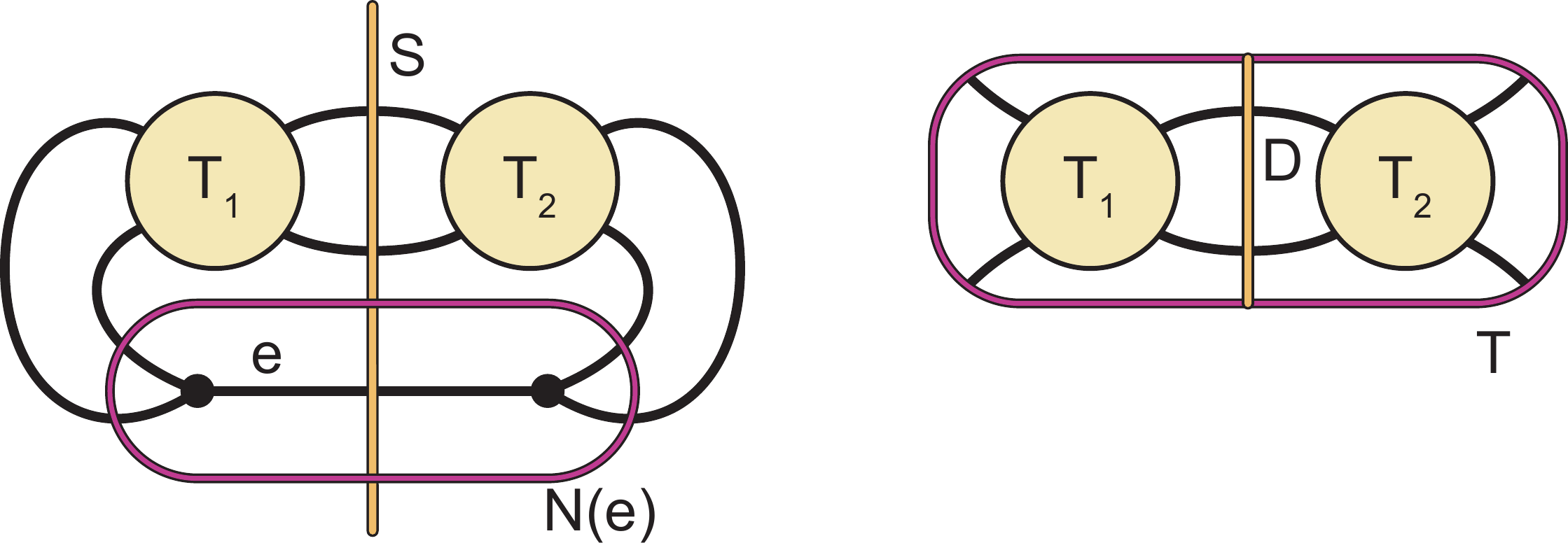}
\caption{Left: The edge $e$ and sphere $S$ for the theta curve $\thetan$. Right: The exterior of the edge $e$ is the tangle $T$.  The sphere $S$ meets $T$ in a disk $D$ expressing $T$ as the tangle sum $T_1 + T_2$. }
\label{1-EdgeEandS}
\end{figure}

Let $T$ be the tangle whose underlying 3-ball is $X(e)$, the closure of the complement of $N(e)$, and whose strands are the intersection of $\thetan$ with this 3-ball. The intersection of $S$ with $T$ is a disc $D$ intersecting the strands of $T$ exactly twice.
As in Figure \ref{1-EdgeEandS}, we may consider $T$ as a marked tangle with the disk $D$ decomposing $T$ as a tangle sum of two marked tangles $T_1$ and $T_2$, thus $T = T_1 + T_2$.

Since the sphere $S$ is essential, $D$ is an essential summing disk of $T$.  Consequently, neither $T_1$ nor $T_2$ is an integral tangle. Moreover, neither $T_1$ nor $T_2$ may be a split tangle since otherwise $\thetan$ would contain a non-trivial knot summand contrary to hypothesis.

Since $\alpha$ is 
an arc along which a proper rational tangle replacement unknots $\thetan$,
 the unknotting proper rational tangle replacement along $\alpha$ must transform $T$ into an integral tangle.  However, as $T$ is a non-trivial tangle sum, Proposition~\ref{prop:unknottingtanglesum} below shows this is impossible in the case of a crossing change, whereas Proposition~\ref{prop:properrationalunknottingtanglesum} shows this is impossible in the case of a proper rational tangle replacement of distance greater than two.
Hence we arrive at a contradiction. \end{proof}

\section{Tangle sums and proper rational tangle replacements}

In this section, we investigate proper rational tangle replacements with respect to tangle sums. 
We then construct a specific tangle $\rho$ (shown in Figure \ref{fig:rho}) whose denominator closure has sufficiently high unknotting number and proper rational unknotting number. 
This tangle will serve a specific role in the proofs of both Proposition \ref{prop:unknottingtanglesum} and Proposition \ref{prop:properrationalunknottingtanglesum}, which handle the cases of crossing changes (distance two) and proper rational tangle replacements (distance greater than two) along marked tangle sums separately. 

\begin{prop}\label{prop:essconwaydisjointsummingdisk}
Suppose that $C$ is a collection of mutually disjoint essential Conway spheres in an essential tangle $T$. If a marked tangle $T$ is the tangle sum of tangles, neither of which is integral, then there exists an essential summing disc for $T$ disjoint from $C$. 
\end{prop}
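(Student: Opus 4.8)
The plan is to analyze the intersection pattern between the given summing disc and the Conway spheres, then use an innermost/outermost argument to either remove intersections or locate a summing disc disjoint from $C$. First I would fix an essential summing disc $D$ for $T$, realizing the decomposition $T = T_1 + T_2$ with neither summand integral, and isotope $D$ to meet $C$ transversely and minimally. Since each sphere in $C$ is a Conway sphere, it meets $T$ in four points, while $\bdry D$ and $D$ meet the strands in two points; thus $D \cap C$ consists of circles and arcs whose endpoints on $\bdry C$ are controlled by the four punctures. The goal is to show $D \cap C = \emptyset$ after isotopy, or else to replace $D$ by a different essential summing disc disjoint from $C$.

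The key steps, in order, would be as follows. First, I would eliminate circles of intersection: a circle in $D \cap C$ that is innermost on some Conway sphere $S_i \in C$ bounds a subdisc $E \subset S_i$ meeting the strands in zero, one, or two points; using essentiality of $S_i$ and the absence of local knotting (guaranteed since $T$ is essential and we work in the no-local-knotting setting of this subsection), I would argue as in Lemma~\ref{lem:lowpunct disks} that such a circle can be removed by an isotopy, either because $E$ compresses $D$ or $S_i$, or because a parallelism allows pushing across. Second, with circles gone, I would treat arcs of $D \cap C$: an outermost arc on $D$ cuts off a subdisc $E_1 \subset D$ meeting $C$ only along that arc, and I would use $E_1$ to $\bdry$-compress or isotope a Conway sphere, again invoking the punctures-parity and essentiality arguments. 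The upshot I want is that every such arc separates punctures on both $D$ and on the relevant component of the Conway sphere in a way that is incompatible with minimality unless the intersection is empty.

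Alternatively — and this may be the cleaner route — rather than forcing the \emph{fixed} disc $D$ to become disjoint from $C$, I would allow myself to produce a \emph{new} essential summing disc. By Corollary~\ref{cor:unique summing slope}, any essential twice-punctured disc in $T$ has boundary slope $\infty$, the summing slope; so any twice-punctured essential disc I extract from the cut-open manifold along $C$ will automatically be a summing disc. The strategy would then be: cut $T$ along $C$ into pieces, observe that $D$ induces an essential twice-punctured disc in one of the pieces (or can be surgered to one), and reassemble. The fact that neither $T_1$ nor $T_2$ is integral, combined with Corollary~\ref{cor:integral sum}, ensures the summing disc I recover genuinely witnesses a nontrivial sum rather than collapsing to an integral summand.

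The main obstacle I expect is the arc case of the intersection analysis: unlike the situation in Lemma~\ref{lem:lowpunct disks}, where both objects are twice-punctured discs in a single tangle, here the Conway spheres are four-punctured and come as a whole collection $C$, so an outermost arc on $D$ may cut off a subdisc whose boundary arc on a Conway sphere separates the four punctures into an uncooperative $2$--$2$ or $1$--$3$ pattern. Controlling the parity and number of punctures on the subdiscs produced by $\bdry$-compression — and ensuring that the compression does not merely trade one essential Conway sphere for a less useful configuration while destroying minimality — is where the care is needed. I would handle this by carefully tracking that $D$ carries only two punctures total, which sharply limits how an arc can separate the four punctures of a Conway sphere, and by processing the spheres of $C$ one at a time using the disjointness of the $S_i$ from one another.
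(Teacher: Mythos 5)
There is a genuine gap here, and also a misreading of the setup. The Conway spheres in $C$ are closed surfaces in the interior of the ball underlying $T$, while $\bdry D$ lies in $\bdry T$; hence $D \cap C$ consists only of circles, and the ``arc case'' that you identify as the main obstacle does not arise at all. More seriously, your primary route --- isotoping the \emph{fixed} summing disc $D$ off of $C$ --- cannot work in general. The innermost-circle arguments do eliminate circles bounding subdiscs of $D$ or of a Conway sphere with zero or one punctures, but they leave exactly the stubborn configuration: circles of $D \cap C$ bounding the twice-punctured subdisc of $D$ on one side and separating the four punctures of a Conway sphere $S_i$ into $2$--$2$ on the other. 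Such circles are essential on both surfaces and are not removable by isotopy (if they were, $\bdry D$ could always be pushed off every Conway sphere, which is false). The proposition only claims the existence of \emph{some} essential summing disc disjoint from $C$, and the proof must produce a new one.

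The missing construction is the following. After the innermost-circle clean-up one knows that $C$ cuts $D$ into a twice-punctured disc and annuli, and $D$ cuts each relevant sphere of $C$ into two twice-punctured discs and annuli. Take the outermost annulus $a$ of $D \setminus C$, meeting $\bdry D$ and a curve $\gamma$ on a component $C_\gamma$ of $C$, and cap $a$ with either of the two complementary discs $c', c''$ of $\gamma$ in $C_\gamma$ to obtain two candidate summing discs $D' = a \cup c'$ and $D'' = a \cup c''$, each twice punctured, with boundary $\bdry D$, and isotopic off $C$. These decompose $T$ as $T_1 + T_2 + T_3$ with $C_\gamma$ bounding the essential middle tangle $T_2$. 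If both $D'$ and $D''$ were inessential, then Corollary~\ref{cor:integral sum} would force $T_1$ and $T_3$ to be integral, making $C_\gamma$ parallel to $\bdry T$ and hence inessential --- a contradiction. So at least one of $D', D''$ is an essential summing disc with fewer intersections with $C$, contradicting minimality. Your ``alternative route'' gestures in this direction via Corollaries~\ref{cor:unique summing slope} and~\ref{cor:integral sum}, but ``cut along $C$ and surger $D$ to a disc in one of the pieces'' does not by itself identify which capped-off disc is essential; the dichotomy between $D'$ and $D''$, resolved by the essentiality of $C_\gamma$, is the heart of the argument and is absent from your proposal.
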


\begin{proof}
Out of all essential summing disks for $T$ that are transverse to $C$, let $D$ be chosen to minimize $|D \cap C|$.  For a contradiction, assume that $|C \cap D| >0$.

\begin{claim}\label{claim:bothpointsind}
$C$ cuts $D$ into a disk with two marked points and a collection of annuli. 
\end{claim}

\begin{proof}
A standard innermost loop argument shows that the intersection consists of some number of curves, each of which bounds an annulus in $D$ with $\partial D$ that is disjoint from the marked points. For completeness, we provide details.
Let $\gamma$ be a curve of $C \cap D$ that is innermost in $D$, and let $d \subset D$ be the subdisk that it bounds.  

Suppose $d$ is disjoint from the marked points on $D$. Then, since $C$ is essential, $\gamma$ must also bound a disk $c \subset C$ that is disjoint from the four marked points on $C$. Then $c \cup d$ is a sphere disjoint from the strands of $T$.  Since $T$ is a tangle in a ball, this sphere $c \cup d$ must bound a ball.  Because the tangle $T$ has no closed components, this ball is disjoint from the strands of $T$.  Hence, there is an isotopy of $D$ with support in a neighborhood of this ball that takes $c$ through the ball to $d$ and then just past $D$. Such an isotopy removes the intersection $\gamma$ (and possibly other intersections) without introducing new intersections.  Thus it contradicts the minimality of $|C \cap D|$.

 If $d$ contained just one of the marked points, then as every sphere in a ball separates, $\gamma$ would have to bound a disk $c \subset C$ that contains just one of the four marked points in $C$.  However then $c \cup d$ would be a sphere bounding a ball in $T$ that intersects the strands in $T$ in a single arc.  Since $T$ has no local knotting, this arc must be an unknotted arc running from $c$ to $d$.  Hence, as in the previous situation, we may isotope $d$ along the arc through $d$ and just past $c$ so remove the intersection $\gamma$ without introducing new intersections.  Thus this again contradicts the minimality of $|C \cap D|$.
 \end{proof}

\begin{claim}\label{claim:twopointsinc}
$D$ cuts $C$ into two disks each with two marked points and a collection of annuli.
\end{claim}

\begin{proof}
We will use another innermost circle argument to show that any curve of $C \cap D$ that is innermost in $C$ bounds a subdisk containing exactly two marked points of $C$. This implies that $C \cap D$ consists of a family of parallel curves that separate the four marked points of $C$ into two pairs. From that our claim follows. 

Let $\gamma$ be a curve of $C \cap D$ that is innermost in $C$, and let $c \subset C$ be the subdisk that it bounds. Out of all such curves, we may assume that we chose $\gamma$ and $c$ to contain as few marked points as possible. Since $C$ has four marked points, this means that $c$ has two or fewer marked points. We show that it has exactly two. 

Suppose $c$ is disjoint from the marked points of $c$.  Then since $\gamma =\bdry c$ cobounds an annulus $a \subset D$ with $\bdry D$ that is disjoint from the marked points of $D$ (by Claim~\ref{claim:bothpointsind}), we may form a disk $\delta = a\cup c$ with $\bdry \delta = \bdry D$ that is disjoint from the strands of $T$.  But then $T$ is a split tangle, contradicting the assumption that $T$ is essential.  

Now suppose $c$ contains only one marked point of $C$.  Then since $\gamma = \bdry c$ bounds a subdisk $d \subset D$ containing the two marked points of $D$ (by Claim~\ref{claim:bothpointsind}), we may form a sphere $c \cup d$ that meets $T$ in three points.  Since this sphere bounds a ball in the ball underlying $T$, this is again a contradiction.
\end{proof}

Let $a$ be the outermost annulus of $D \cut C$.  Then $\bdry a$ is  $\bdry D$ and a component $\gamma$ of $C \cap D$.  Let $C_\gamma$ be the component of $C$ containing $\gamma$.  Let $c'$ and $c''$ be the two complementary disks of $C_\gamma$ bounded by $\gamma$.  Then let $D' = a \cup c'$ and $D'' = a \cup c''$.  These are disks that each meet $T$ in two points and have $\bdry D = \bdry D' = \bdry D''$.  Furthermore, a slight isotopy  makes $C$ disjoint from both $D'$ and $D''$.  Thus, to contradict the minimality of $|C \cap D|$, it remains to see that at least one of $D'$ and $D''$ is an essential summing disk.  

Observe that $D'$ and $D''$ decompose $T$ into a sum $T=T_1 + T_2 + T_3$ where $C_\gamma$ bounds the tangle $T_2$.  Hence $T_2$ is an essential tangle, and in particular it is not integral. If both $D'$ and $D''$ were inessential summing disks, then both $T_1$ and $T_3$ would be integral by Corollary~\ref{cor:integral sum}. However, if both $T_1$ and $T_3$ are integral tangles, then the component $C_\gamma$  of $C$ is isotopic to the boundary of the ball.  But this contradicts that each component of $C$ is an essential Conway sphere. Thus at least one of $D'$ and $D''$ is an essential summing disk.   
\end{proof}

Let $\alpha$ continue to denote an unknotting arc for $\thetan$, or more generally, the arc of a proper rational tangle replacement that unknots $\thetan$.

\begin{lemma}\label{lem:essSummingdiskalpha}
Every essential summing disk for $T$ must intersect $\alpha$.
\end{lemma}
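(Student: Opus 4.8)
The plan is to argue by contradiction. Suppose some essential summing disk $D$ for $T$ is disjoint from $\alpha$. As a summing disk, $D$ decomposes $T$ as a tangle sum $T = T_a + T_b$, and because $D$ is essential, neither $T_a$ nor $T_b$ is integral: an integral summand would let one isotope $D$ across its crossings to $\bdry T$, making $D$ boundary-parallel, exactly as in the reasoning of Proposition~\ref{prop:unknottingvertexsum}. This non-integrality is what I will ultimately contradict.

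The key observation is that disjointness from $\alpha$ localizes the replacement to one side of $D$. Since $\alpha$ is connected, meets the strands of $T$ only in its endpoints, and is disjoint from $D$, it lies entirely in the ball on one side of $D$, say the $T_b$ side. Choosing the regular neighborhood $B$ of $\alpha$ from Definition~\ref{defn:CCRTR} small enough to be disjoint from $D$, the proper rational tangle replacement is supported in the $T_b$ ball. Consequently $D$ is untouched and persists as a summing disk after the replacement, the summand $T_a$ is literally unchanged, and the resulting tangle is again a tangle sum $\widehat{T} = T_a + T_b'$, where $T_b'$ is the image of $T_b$ under the replacement.

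Now I invoke the two facts already in hand. Since $\alpha$ unknots $\thetan$ and is disjoint from the edge $e$, the replacement transforms $T$, the exterior of $e$, into an integral tangle, exactly as established in the proof of Proposition~\ref{prop:unknottingvertexsum}; that is, $\widehat{T} = T_a + T_b'$ is integral. By Corollary~\ref{cor:integral sum}, every summand of an integral tangle sum is integral, so in particular $T_a$ is integral. This contradicts the conclusion of the first paragraph that $T_a$ is non-integral, and the lemma follows. Note that $D$ is an arbitrary essential summing disk throughout, so this establishes the statement for all of them at once.

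I expect the main obstacle to be the bookkeeping around localization rather than any deep geometric input: one must verify carefully that disjointness of $\alpha$ from $D$ truly confines the \emph{entire} replacement (a neighborhood of $\alpha$, not just $\alpha$ itself) to one side, so that the opposite summand $T_a$ is unchanged and $D$ survives as a summing disk of $\widehat{T}$ with compatible markings for the application of Corollary~\ref{cor:integral sum}. Once this is pinned down, the contradiction is immediate, requiring nothing beyond Corollary~\ref{cor:integral sum} and the non-integrality forced by essentiality.
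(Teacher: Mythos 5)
Your proof is correct and follows essentially the same route as the paper: localize the replacement to the summand containing $\alpha$, observe the result is still a tangle sum with the other summand unchanged, and apply Corollary~\ref{cor:integral sum} to contradict the non-integrality forced by essentiality of the summing disk. The localization step you flag as the "main obstacle" is exactly as routine as you suspect, and the paper treats it in one line.
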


\begin{proof}
Suppose an essential summing disk for $T$ that is disjoint from $\alpha$ expresses $T$ as the the sum $T=T_1 + T_2$.  Thus neither $T_1$ nor $T_2$ is integral, and we may assume $\alpha$ is contained in $T_2$.   

Say performing the crossing change (or proper RTR) along $\alpha$ in $T_2$ produces $T_2^*$.  Then after the crossing change (proper RTR) along $\alpha$ in $T$, the summing disk expresses the resulting tangle as $T^* = T_1 +T_2^*$. 
Since $T^*$ is an integral tangle, Corollary~\ref{cor:integral sum} implies that both $T_1$ and $T_2^*$ are integral tangles.  However this contradicts that $T_1$ is non-integral.
\end{proof}

\begin{figure}
    \centering
    \includegraphics[width=.9\textwidth]{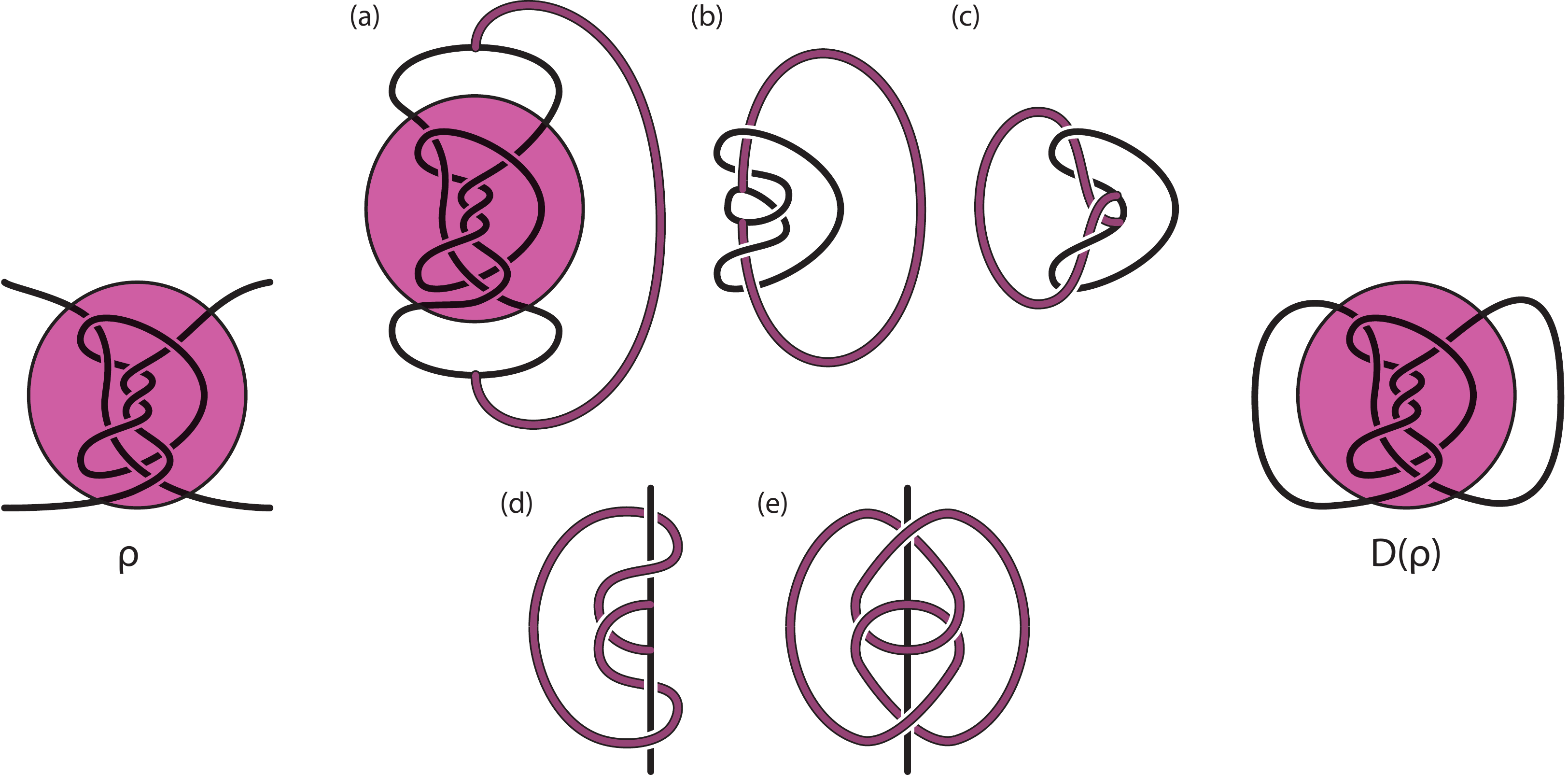}
    \caption{Left: The tangle $\rho$.  Center: The normal closure of $\rho$ with its core arc (a) is isotopic to an unknotted axis with an arc (b)--(d).  The double branched cover along the axis (e) is the figure eight knot, $4_1$. Right: The denominator closure $D(\rho)$ of $\rho$ is observed to be the knot $10_{139}$.}
    \label{fig:rho}
\end{figure}

\begin{claim}\label{claim:rho}
The tangle $\rho$ shown in Figure~\ref{fig:rho} 
has unknotted numerator closure $N(\rho)$, 
 is not a non-trivial tangle sum, and contains no essential torus, essential Conway sphere, or essential annulus. In particular, the double branched cover of $\rho$ is the exterior of the hyperbolic knot $4_1$, the figure eight knot.  Furthermore, its denominator closure is the knot $10_{139}$ which has unknotting number $4$ and proper rational unknotting number $\geq 2$.
\end{claim}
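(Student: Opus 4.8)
The plan is to reduce every assertion about $\rho$ to a single computation of its double branched cover, supplemented by two diagrammatic identifications read off from Figure~\ref{fig:rho}. First I would check, by an explicit sequence of isotopies of the diagram, that the numerator closure $N(\rho)$ is the unknot and, separately, that the denominator closure $D(\rho)$ is the knot $10_{139}$. Both are routine once the moves indicated in the figure are carried out.

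Write $\Sigma_2(\rho)$ for the double branched cover of the underlying ball branched over the two strands of $\rho$. The heart of the argument is to identify $\Sigma_2(\rho)$ with the exterior of $4_1$. Since $N(\rho)$ is the unknot, its double branched cover is $S^3$. Forming $N(\rho)$ caps $\rho$ off with the complementary (trivial) numerator tangle, whose double branched cover is a solid torus $V$; hence $S^3 = \Sigma_2(\rho)\cup_{\partial}V$, so $\Sigma_2(\rho)$ is the exterior of the core $k$ of $V$. This core is the lift of the numerator-closure arc, and following the simplification (a)--(e) of Figure~\ref{fig:rho}, in which the configuration is reduced to an unknotted axis together with an arc whose branched cover is computed directly, identifies $k$ as the figure eight knot. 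Consequently $\Sigma_2(\rho)$ is hyperbolic; in particular it is irreducible, atoroidal, and anannular. Irreducibility already shows that $\rho$ has no local knotting.

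The remaining structural statements then follow from the standard correspondence between essential surfaces in a tangle without local knots and $\iota$-invariant essential surfaces in its double branched cover, $\iota$ being the covering involution. A $4$-punctured Conway sphere lifts to a closed orientable surface of Euler characteristic $0$, i.e.\ a torus, and a twice-punctured summing disk lifts to an annulus; an essential annulus or torus disjoint from the strands lifts to an essential annulus or torus (or a non-orientable analogue, whose neighborhood boundary is again such a surface). The equivariant loop, disk, and sphere theorems ensure that an essential surface below lifts to an essential surface above, so an essential Conway sphere, essential torus, essential annulus, or essential summing disk in $\rho$ would yield an essential torus or annulus in the hyperbolic manifold $\Sigma_2(\rho)$, which is impossible. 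In particular $\rho$ has no essential summing disk and hence is not a non-trivial tangle sum.

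For the final sentence of the claim, I would identify $D(\rho)=10_{139}$ from Figure~\ref{fig:rho} and invoke its known invariants: its unknotting number is $4$ (the signature bound already gives $u\ge 3$, and the value $4$ is classical), and its proper rational unknotting number is at least $2$, following \cite{properrationalunknotting} (equivalently, a single proper rational tangle replacement unknotting $10_{139}$ would exhibit $\Sigma_2(10_{139})$ as a non-integral Dehn surgery on a knot in $S^3$, which is obstructed). The main obstacle is the double branched cover identification in the second step: one must faithfully track the lift of the numerator arc through the branched covering as the diagram is simplified and confirm that the resulting knot is $4_1$ and not some other small knot. The secondary subtlety is verifying that the surface correspondence is genuinely essential-to-essential, which requires the equivariant incompressibility machinery rather than a naive lift.
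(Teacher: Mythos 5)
Your proposal follows essentially the same route as the paper: identify the double branched cover of $\rho$ as the exterior of the figure-eight knot by tracking the lift of the core arc of the numerator closure through the simplification in Figure~\ref{fig:rho}, use hyperbolicity of that cover to exclude essential summing disks, Conway spheres, tori, and annuli, and quote the known values $u(10_{139})=4$ and $\uprop(10_{139})\geq 2$ for the denominator closure. The only difference is that you spell out the equivariant-surface correspondence in more detail than the paper does, which is a harmless elaboration rather than a different argument.
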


\begin{proof}
Ignoring the extra arc, Figure~\ref{fig:rho}(a)--(c) shows that $N(\rho)$ is the unknot.
This extra arc, however, is the core arc of the trivial rational tangle joined to $\rho$ to make $N(\rho)$.  Figure~\ref{fig:rho}(a)--(d) shows the straightening of the unknot $N(\rho)$ at the expense of twisting up this arc.  Figure~\ref{fig:rho}(e) shows the knot that is the lift of the arc to the double branched cover along the unknot.  One readily observes that this is the figure eight knot, $4_1$.
Since $\rho$ was the exterior tangle of the core arc, this shows that the double branched cover of $\rho$ is the exterior of the figure eight knot $4_1$.  

If $\rho$ were to contain an essential summing disk, an essential torus, an essential Conway sphere, or essential annulus then the double branched cover of $\rho$ would contain either an essential annulus or an essential torus and hence be non-hyperbolic.  However, as the double branched cover of $\rho$ is the exterior of the hyperbolic knot $4_1$, this cannot be.

The denominator closure of $\rho$, $D(\rho)$, is a 10 crossing knot which SnapPy \cite{snappy} identifies as the knot $10_{139}$ (which one may also confirm by hand).   As informed by KnotInfo \cite{knotinfo}, this knot has unknotting number $4$, see \cite{kawamura, gibsonishikawa}.  It also has proper rational unknotting number $\geq 2$ \cite{properrationalunknotting}.
\end{proof}

\subsection{Sums of non-integral tangles admit no crossing changes to integral tangles.}

In what follows, we study crossing changes on 2-strand tangles, making heavy use of work of Eudave-Mu\~noz and Gordon--Luecke. Eudave-Mu\~noz used tangles to construct a family of strongly invertible hyperbolic knots having distance 2 Dehn surgeries to toroidal 3-manifolds. Gordon and Luecke show that unknotting number one knots with essential Conway spheres have a close relationship to Eudave-Mu\~noz’s construction.

\begin{prop}\label{prop:unknottingtanglesum}
Let $T$ be a marked tangle sum of tangles, neither of which is integral. Then $T$ admits no crossing change to an integral tangle.
\end{prop}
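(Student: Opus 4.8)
The plan is to argue by contradiction, convert the crossing change into a half-integral Dehn surgery in the double branched cover, and then invoke the surgery classifications of Eudave-Mu\~noz and Gordon--Luecke \cite{GordonLuecke}. Assume $T = T_1 + T_2$, with neither summand integral, admits a crossing change along an arc $\alpha$ producing an integral tangle $T^\ast$. First I record two normalizations. Essentiality is automatic, since a boundary-parallel summing disk would force a summand to be the $0$- or $\infty$-tangle, so the summing disk $D$ is essential. We may also assume $T$ has no local knots, since a crossing change inside a local-knot ball is isotopic into a single summand while a local knot disjoint from $\alpha$ survives into $T^\ast$, and neither is compatible with $T^\ast$ being rational. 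Now by Lemma~\ref{lem:essSummingdiskalpha}, every essential summing disk meets $\alpha$; in particular the crossing change cannot be isotoped into either $T_i$, so it genuinely crosses the decomposition along $D$.

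Next I would pass to the double branched cover $M = \Sigma_2(T)$, a manifold with torus boundary. Because neither $T_i$ is integral, the essential summing disk $D$ lifts to an essential annulus $A$ splitting $M$ as $\Sigma_2(T_1)\cup_A \Sigma_2(T_2)$, so $M$ is annular. Since $T^\ast$ is integral and hence rational, $\Sigma_2(T^\ast)$ is a solid torus, and the crossing change lifts to a distance-two (half-integral) surgery on the knot $\tilde\alpha \subset M$ carrying $M$ to this solid torus; equivalently, a knot in a solid torus admits a half-integral surgery onto the annular manifold $M$. This is exactly the kind of exceptional distance-two surgery that Eudave-Mu\~noz analyzed via his tangle construction and that Gordon--Luecke classified in the unknotting-number-one setting. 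The branched-cover dictionary identifies the essential Conway spheres of the classification with essential four-punctured spheres in $T$, and the forbidden alternative---an unknotting crossing change disjoint from such a sphere---is ruled out by Lemma~\ref{lem:essSummingdiskalpha} together with Proposition~\ref{prop:essconwaydisjointsummingdisk}, which produces an essential summing disk disjoint from any such collection of spheres that $\alpha$ would then be forced to avoid.

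The classification should then force $T$ into the single exceptional configuration compatible with the sum decomposition, namely one in which the crossing change is confined to a tangle piece equivalent to the tangle $\rho$ of Claim~\ref{claim:rho}. Since the double branched cover of $\rho$ is the hyperbolic figure-eight complement, $\rho$ is genuinely atomic, containing no essential Conway sphere, annulus, or torus, which is what pins the exceptional piece down uniquely. But the denominator closure of $\rho$ is the knot $10_{139}$, and the forced picture would exhibit an unknotting crossing change for it, contradicting $u(10_{139}) = 4$. This contradiction shows that $T$ admits no crossing change to an integral tangle.

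The main obstacle is the faithful translation of the Eudave-Mu\~noz/Gordon--Luecke classification into the tangle-sum language. Note that the naive route---passing to a knot closure and quoting Scharlemann's primality theorem---does not suffice, because the summands may have unknotted closures, so one is genuinely forced to work at the level of the annular branched cover. One must verify that the annulus from the summing disk and the tori from the Conway spheres interact in the branched cover exactly as the classification demands, track the framing so that a crossing change corresponds to a distance-two (rather than integral) surgery, and confirm that $\rho$ is the unique exceptional tangle piece consistent with an essential sum, so that the computed value $u(10_{139}) = 4$ delivers the contradiction. Controlling this interaction, rather than any single computation, is the technical heart of the argument.
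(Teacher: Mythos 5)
There is a genuine gap, and it centers on a misreading of the role of the tangle $\rho$. In the paper, $\rho$ is an \emph{auxiliary} tangle chosen by the authors and glued onto $T$ to form the knot $K_\rho = N(\rho + T)$: since $N(\rho)$ is the unknot, the hypothesized crossing change unknots $K_\rho$, so $K_\rho$ is an unknotting number one knot with an essential Conway sphere $\bdry\rho$, and Gordon--Luecke's Theorem 6.2 (a statement about knots in $S^3$, not about knots in solid tori or annular manifolds) applies. Your proposal instead treats $\rho$ as something the classification \emph{produces} inside $T$ (``the crossing change is confined to a tangle piece equivalent to $\rho$''), and derives the contradiction from $u(10_{139})=4$ on that basis. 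This has the logic backwards: $\rho$ is chosen precisely because it is \emph{not} exceptional (not a tangle sum, no essential Conway spheres, annuli, or tori), so that any exceptional structure the classification detects must live in $T$ or force the unknotting arc into $\rho$. The properties of $\rho$ recorded in Claim~\ref{claim:rho} are used to (i) exclude the EM-knot alternative (an EM-knot's unique Conway sphere splits it into two Montesinos tangles, but $\rho$ is not a tangle sum), and (ii) in the case where the arc is isotoped off all Conway spheres, force the arc into $\rho$ and conclude $u(D(\rho))=1$, contradicting $u(10_{139})=4$. Your ``contradiction'' does not attach to any actual configuration the classification yields.

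Beyond that, the proposal never engages with the actual exceptional alternatives of the classification, which are the EM-knots and the EM-tangles $A_1(\ell,m)$, $A_2(\ell,m)$ --- infinite families, not a single piece. Ruling out the case $P_0 = T$ requires observing that $T$ is homotopic to the $0$--tangle (excluding $A_1$) and then the continued-fraction computation that the standard unknotting arc of $A_2(\ell,m)$ yields the tangle $[0,-\ell,-m,-2] = \tfrac{2m-1}{\ell(2m-1)-2}$, which is integral only for degenerate parameters; the case $P_0 \subsetneq T$ is where Proposition~\ref{prop:essconwaydisjointsummingdisk}, Lemma~\ref{lem:essSummingdiskalpha}, and Corollary~\ref{cor:integral sum} are actually deployed, not in the ``arc disjoint from Conway spheres'' case as you suggest. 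Finally, your alternative of working directly with half-integral surgery in $\Sigma_2(T)$ (a knot in a solid torus with a distance-two surgery to an annular manifold) would require a Dehn-filling theorem for manifolds with boundary rather than Theorem 6.2; the paper only takes that route (via Gordon--Luecke's Theorem 5.2) for the higher-distance proper RTR case, and even there it first establishes the Conway-sphere non-disjointness by the same $K_\rho$ closure argument. As written, your sketch identifies the right references but does not supply the construction ($K_\rho$), the case analysis, or the computation that constitute the proof.
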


\begin{proof}[Proof of Proposition~\ref{prop:unknottingtanglesum}]
Let the marked tangle $T$ be expressed as $T=T_1 + T_2$ for some non-integral tangles $T_1$ and $T_2$. By the definition of tangle sum, the summing disks for this decomposition of $T$ has boundary of slope $\infty$. 
Indeed, by Corollary~\ref{cor:unique summing slope}, every essential twice-punctured disk in tangle $T$ has boundary slope $\infty$.

Assume for contradiction that $T$ admits a crossing change along an arc $\alpha$ to an integral tangle.
By adding the mirror of that integral tangle to $T$, say along $T_2$, and updating $T$ to be that result, we may assume the crossing change along $\alpha$ transforms $T$ into the $0$--tangle.

Let $K_\rho = N(\rho+T)$, the numerator closure of $\rho+T$ where $\rho$ is the tangle shown in Figure~\ref{fig:rho}.
Since a crossing change along $\alpha$ transforms $T$ to the $0$--tangle,  it also transforms $K_\rho = N(\rho+T)$ into $K_\rho^* = N(\rho+0) = N(\rho)$ which is the unknot by Claim~\ref{claim:rho}.  Hence $K_\rho$ is a knot with unknotting number $1$ and unknotting arc $\alpha \subset T$.
Because both $T$ and $\rho$ are essential tangles, it follows that the sphere $\bdry \rho$ (which is isotopic to $\bdry T$) is an essential Conway sphere for $K_\rho$.  Thus $K_\rho$ is a knot with unknotting number $1$ and an essential Conway sphere.

Therefore, by \cite[Theorem 6.2]{GordonLuecke}, up to mirroring $K_\rho$, either
\begin{enumerate}
    \item any unknotting arc for $K_\rho$ may be isotoped to be disjoint from all essential Conway spheres and essential tori in the exterior of $K_\rho$,
    \item $K_\rho$ is an EM-knot, or 
    \item $K_\rho$ is the union of an essential tangle $P$ and an essential EM-tangle $P_0$. 
\end{enumerate}
Furthermore, in case (3), every unknotting arc for $K_\rho$ is isotopic into the EM-tangle $P_0$ and the standard unknotting arc for $P_0$ continues to be an unknotting (trivializing) arc for $K_\rho$.  Figure~\ref{fig:EM-tangles} shows the two families of EM-tangles, $A_1(\ell,m)$ and $A_2(\ell,m)$, with their standard unknotting arcs and essential summing disks, see \cite[Figure 4.3]{GordonLuecke}.  They are depicted, up to adding integral tangles, so that the summing disk decomposes the EM-tangles as a non-trivial tangle sum.  So that the summing disks are essential and the tangle sum is non-trivial: for $A_1(\ell,m)$ we assume $|\ell|>1$, $m\neq 0$ and $(\ell,m) \neq (2,1)$ or $(-2,-1)$; for $A_2(\ell,m)$ we assume $|\ell|>1$, $m \neq 0,1$.  Also shown are the rational tangles resulting from the crossing change along the standard unknotting arc.  In these tangles, the summing disk is unique (see eg. \cite[Addendum to Theorem 4.3]{GordonLuecke}).

\bigskip
\noindent
{\bf Case (1).}
Assume the unknotting arc $\alpha$ for $K_\rho$ may be isotoped to be disjoint from all essential Conway sphere and essential tori in the exterior of $K_\rho$.

\begin{claim}\label{claim:T1orT2essential}
At least one of $T_1$ or $T_2$ is an essential tangle.
\end{claim}

\begin{proof}
If not, then both are rational tangles since we are assuming $T$ is not locally knotted.  Since neither $T_1$ nor $T_2$ is an integral tangle, then $D(T)$, the denominator closure of $T$, is a connected sum of two non-trivial two-bridge knots.  Then the crossing change along $\alpha$ taking $T$ to the $0$--tangle transforms $D(T)$ to the unknot.  Yet this contradicts that unknotting number one knots are prime \cite{Scharlemann:Unknotting}.
\end{proof}

Say $T_2$ is an essential tangle.  Then since $\rho+T_1$ is not split (as it is neither rational nor locally knotted),  $\bdry T_2$ is an essential Conway sphere in $K_\rho$.  After a slight push into the interior of $T$ we may assume $\bdry T_2$ is disjoint from $\rho$.  Then, by the hypothesis of this case, we may assume that $\alpha$ is disjoint from both $\bdry \rho$ and $\bdry T_2$.

\smallskip
If $\alpha$ were contained in $T_2$, then the crossing change would change $T_2$ into $T_2^*$.  Thus $T_1+ T_2^*$ must be the $0$--tangle since that is the only way to fill $\rho$ to get the unknot.  However the $0$--tangle, indeed any integral tangle, only decomposes as a sum of two tangles if the two tangles are integral by Corollary~\ref{cor:integral sum}.  Yet this implies that $T_1$ is an integral tangle, contrary to assumption.  Hence $\alpha$ is not contained in $T_2$.

\smallskip
If $\alpha$ were contained in $T$ but not in $T_2$, 
then the crossing change does not alter that each of the four strands of the tangle between $\rho$ and $T_2$ connect $\bdry \rho$ to $\bdry T_2$.  Hence the crossing change cannot make either of these Conway spheres become inessential.  

Indeed, suppose there were a compressing disk $D$ for one of the spheres after the crossing change.  Isotop it rel--$\bdry$ to intersect the two spheres minimally. Being a compressing disk for one of the spheres, any intersections in the interior of $D$ must be with the other sphere.  If there are any such intersections, then they will cut off an innermost subdisk $D'$ from $D$.  By the presumed minimality, $D'$ must be a compressing disk too.  Hence we may restrict attention to a compressing disk $D$ that meets the two spheres only in its boundary.  Such a disk $D$ would have to be contained in the $S^2 \times I$ tangle between $\rho$ and $T_2$ since both those tangles are essential. Yet then $D$ must separate a strand of the tangle from the other strands, preventing it from connecting $\bdry \rho$ to $\bdry T_2$.  This is a contradiction.

Since the result of the crossing change on $K_\rho$ has an essential Conway sphere, it cannot be the unknot.  Hence $\alpha$ is not contained in $T$. 

\smallskip
Thus $\alpha$ may be isotoped into $\rho$.  Suppose the crossing change transforms $\rho$ into $\rho^*$ as it changes $K_\rho$ to $K_\rho^*$.   By assumption, $K_\rho^*$ is the unknot.
If $\rho^*$ is not a rational tangle, then again $\bdry \rho^*$ will be an essential Conway sphere in $K_\rho^*$ which should be the unknot, a contradiction. Hence $\rho^*$ must be rational.  
Because $\rho$ is homotopic to the $1$--tangle and a crossing change will not alter the homotopy type, $\rho^*$ is also homotopic to the $1$--tangle.  In particular, $\rho^*$ is a rational tangle that is not the $\infty$--tangle. 

Suppose $T_1$ is an essential tangle.  Since $\rho^*$ is not the $\infty$--tangle, $\rho^*+T_1$ must again be an essential tangle.  Then, as $T_2$ is also an essential tangle,  $\bdry (\rho^* + T_1) = \bdry T_2$ is an essential Conway sphere for $K_\rho^*$, a contradiction.
Hence $T_1$ must be an inessential tangle.  By assumption it is not locally knotted and non-integral, so $T_1$ must be a non-integral rational tangle.

If $\rho^*$ is also a non-integral rational tangle, then $\rho^* + T_1$ is a tangle sum of two non-integral (and non-$\infty$) rational tangles.  Hence $\rho^* + T_1$ is an essential tangle.  Therefore, as before, $\bdry (\rho^* + T_1) = \bdry T_2$ is an essential Conway sphere for $K_\rho^*$, a contradiction.

Therefore $\rho^*$ must be an integral tangle.  Hence $D(\rho^*)$, the denominator closure of $\rho^*$, is an unknot.   However, since $D(\rho^*)$ is obtained from the knot $D(\rho)$, the denominator closure of $\rho$, by a crossing change along $\alpha$, then $D(\rho)$ has unknotting number $1$.  This contradicts that $D(\rho)$ has unknotting number $2$ as determined in Claim~\ref{claim:rho}.

\medskip

\noindent
{\bf Case (2).}
An EM-knot has a unique Conway sphere by \cite{EudaveMunoz} (see also \cite[Theorem 6.2(2)(b)]{GordonLuecke}).  This sphere splits the EM-knot into two Montesinos tangles, each a tangle sum of two non-integral rational tangles.   While the essential Conway sphere $\bdry \rho$ splits $K_\rho$ into the tangles $T$ and $\rho$, the tangle $\rho$ is not a tangle sum as shown in Claim~\ref{claim:rho}.  Hence $K_\rho$ cannot be an EM-knot as in case (2) above.

\medskip

\noindent
{\bf Case (3).}
Thus $K_\rho$ must be as in case (3) above: there is an EM-tangle $P_0$ contained in $K_\rho$ so that $\bdry P_0$ is an essential Conway sphere and every unknotting arc for $K_\rho$ may be isotoped into $P_0$.     Furthermore, the standard unknotting arc for $P_0$ must continue to be an unknotting arc for $K_\rho$.  Hence we shift out attention upon the standard unknotting arc of $P_0$, regardless of whether $\alpha$ is isotopic to it in $K_\rho$.

\begin{claim}
Either $P_0 \subset T$ where $\bdry P_0$  is an essential Conway sphere or $P_0 =T$.
\end{claim}

\begin{proof}
Assume the Conway spheres $\bdry \rho$ and $\bdry P_0$ have been isotoped in $K_\rho$ to intersect minimally.  Since both $\bdry \rho$ and $\bdry P_0$ are essential Conway spheres in $K_\rho$, the curves of $\bdry \rho \cap \bdry P_0$ must all be isotopic in each $\bdry \rho$ and $\bdry P_0$ and bound disks in both that intersect the strands twice.  In particular, the components of $\rho \cap \bdry P_0$ must either be essential twice-punctured disks or essential annuli. However by Claim~\ref{claim:rho}, $\rho$ may contain neither.  Hence $\bdry \rho \cap \bdry P_0 = \emptyset$.

If $\bdry \rho$ is contained in $P_0$, then since $\bdry \rho$ is essential in $K_\rho$, either $\bdry \rho$ is also essential in $P_0$ or it cobounds a product tangle with $\bdry P_0$. The latter implies that we may take $\bdry \rho=\bdry P_0$ by an isotopy through the product tangle.  Since the double branched cover of an EM-tangle such as $P_0$ is a graph manifold (see \cite[p2069--2070]{GordonLuecke}), neither is it a hyperbolic manifold nor does it contain any essential torus bounding a hyperbolic manifold.  Since the double branched cover of $\rho$ is a hyperbolic manifold by Claim~\ref{claim:rho}, $\bdry \rho$ may only be contained in $P_0$ if $\bdry \rho = \rho \cap P_0$.
If $\bdry P_0$ is contained in $\rho$, then since $\bdry P_0$ is essential in $K_\rho$, we similarly conclude that either $\bdry P_0$ is also essential in $\rho$ or $\bdry P_0 = \bdry \rho$ after an isotopy. However since $\rho$ contains no essential Conway spheres by Claim~\ref{claim:rho}, this implies $\bdry P_0 = \bdry \rho$.  Since we have already ruled out the possibility that $P_0 = \rho$, we conclude that  $P_0 = T$ if $\bdry P_0 \subset \rho$.
Thus $P_0$ must be contained within $T$.

Finally, observe that if $\bdry P_0$ is not an essential Conway sphere in $T$, the either $\bdry P$ is isotopic to $\bdry T$ through a product tangle in $T$ or there is a compressing disk $d$ for $\bdry P_0$ in $T$ disjoint from the strands.  In the former, we may then assume $P_0=T$.  In the latter,  the disk $d$ would need to become trivial upon inclusion of $T$ into $K_\rho = N(\rho+T)$ since $\bdry P_0$ is essential in $K_\rho$; yet this cannot occur since $\bdry d$ does not bound a disk in $\bdry P_0$ disjoint from the strands.  Hence either $\bdry P_0$ is an essential Conway sphere in $T$ or $P_0 = T$.
\end{proof}

If $P_0$ is contained in $T$ where $\bdry P_0$ is an essential Conway sphere, then by Proposition~\ref{prop:essconwaydisjointsummingdisk} there is an essential summing disk $D_0$ for $T$ that is disjoint from $P_0$.  Assume $D_0$ decomposes $T$ into the sum $T_1+T_2$ and, say, that $P_0 \subset T_2$.  While $\alpha$ must intersect $D_0$ as an arc in $T$ by Lemma~\ref{lem:essSummingdiskalpha}, there must be an isotopy of $\alpha$ through $K_\rho$ to bring it to an arc $\alpha_0$ in $P_0 \subset T_2$ since we are in case (2). The crossing change on $\alpha$ then becomes a crossing change on $\alpha_0$ taking $T_2$ to a tangle $T_2'$.  Thus the crossing changes on $\alpha_0$ takes $K_\rho = N(\rho + T_1 + T_2)$ to $N(\rho + T_1 + T_2')$ which should be an unknot.  However as $\rho$ is a non-trivial tangle whose numerator closure is the unknot by Claim~\ref{claim:rho}, we must have $T_1 + T_2'$ be the $0$--tangle.  Yet this requires both $T_1$ and $T_2'$ to be integral tangles by Corollary~\ref{cor:integral sum}.  As $D_0$ is an essential summing disk, $T_1$ is not an integral tangle, a contradiction. 

If $P_0 = T$ then the summing disk of $P_0$ must be a summing disk of $T$. 
Since $T$ is homotopic to the $0$--tangle, so must be $P_0$.  Thus $P_0$ cannot be an EM-tangle $A_1(\ell,m)$ since they are homotopic to tangles disjoint from their summing disks and hence to the $\infty$--tangle. (See Figure~\ref{fig:EM-tangles}.)  Thus $P_0$ must be an EM-tangle $A_2(\ell,m)$.  Since $T$ admits a crossing change to the $0$--tangle, $P_0$ must also admit a crossing change to the $0$--tangle.  In particular, the standard unknotting arc for $P_0$ must change $P_0$ to the $0$--tangle.  
However, the unknotting crossing change along the standard unknotting arc for an EM-tangle $A_2(\ell, m)$ produces the rational tangle $[0, -\ell, -m, -2] = \frac{2m-1}{\ell(2m-1)-2}$.
This is integral only if $\ell(2m-1)-2 = \pm 1$ which implies that either $\ell = \pm1$ or $m=0,1$.  Yet these choices for $\ell$ or $m$ make the summing disk of $P_0$ inessential so that $P_0$ is not a non-trivial tangle sum.  This is a contradiction. 
\end{proof}

\begin{figure}
    \centering
    \includegraphics[width=\textwidth]{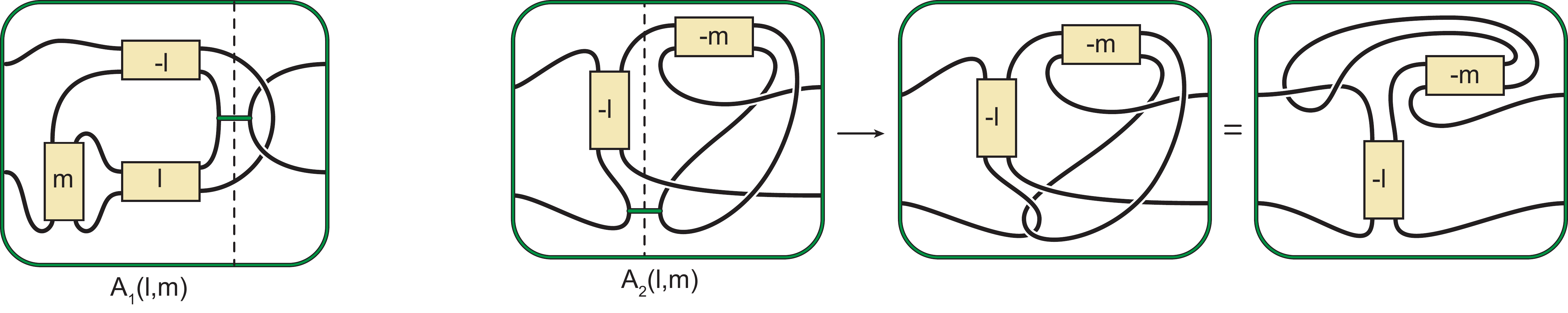}
    \caption{Left: EM-tangle $A_1(\ell,m)$ with its summing disk and standard unknotting arc.  Right: EM-tangle $A_2(\ell,m)$ with its summing disk and standard unknotting arc, followed by the unknotting crossing change along this arc, and an isotopy to more visibly be a rational tangle. (EM-tangels are shown up to adding integral tangles.)}
    \label{fig:EM-tangles}
\end{figure}

\subsection{Sums of non-integral tangles admit no proper RTRs to integral tangles.}

\begin{prop}\label{prop:properrationalunknottingtanglesum}
Let $T$ be a marked tangle sum of tangles, neither of which is integral. Then $T$ admits no proper rational tangle replacement to an integral tangle.
\end{prop}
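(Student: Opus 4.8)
```latex
The plan is to mirror the structure of the proof of Proposition~\ref{prop:unknottingtanglesum}, substituting the crossing-change machinery with the analogous results for proper rational tangle replacements. As before, I would write $T = T_1 + T_2$ with $T_1, T_2$ non-integral, and by adding the mirror of the resulting integral tangle along $T_2$, I would reduce to the case that the proper RTR along an arc $\alpha$ transforms $T$ into the $0$--tangle. I would again form $K_\rho = N(\rho + T)$ using the tangle $\rho$ of Figure~\ref{fig:rho}, so that $K_\rho$ is unknotted by the proper RTR along $\alpha \subset T$, and so that $\boundary \rho$ is an essential Conway sphere for $K_\rho$. The key input is that $K_\rho$ is now a knot with \emph{proper rational unknotting number one} possessing an essential Conway sphere.

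The crucial substitution is at the appeal to \cite{GordonLuecke}: instead of \cite[Theorem 6.2]{GordonLuecke} (which concerns crossing changes / distance-two surgeries), I would invoke the analogous classification for proper rational tangle replacements of distance greater than two. This is precisely the different result from \cite{GordonLuecke} flagged in the introduction. I expect it to give a trichotomy parallel to cases (1)--(3): either the trivializing arc avoids all essential Conway spheres and tori, or $K_\rho$ is built from an explicit family of tangles (an EM-type or Montesinos-type family adapted to higher-distance replacements) with a distinguished trivializing arc. The geometric arguments then carry over almost verbatim: the hyperbolicity of the double branched cover of $\rho$ (Claim~\ref{claim:rho}) and the fact that $\rho$ is not a non-trivial tangle sum and contains no essential Conway sphere, torus, or annulus are properties independent of the distance of the replacement, so the containment claim ($P_0 \subset T$ as an essential Conway sphere, or $P_0 = T$) and the analysis of Case~(1) go through unchanged, since they only use Corollary~\ref{cor:integral sum}, Lemma~\ref{lem:essSummingdiskalpha}, and Proposition~\ref{prop:essconwaydisjointsummingdisk}.

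The two places requiring genuinely new computation are the analogues of Case~(1)'s final contradiction and Case~(3). In Case~(1), after isotoping $\alpha$ into $\rho$, I would argue that $\rho^*$ (the image of $\rho$) must be an integral tangle, so that $D(\rho^*)$ is unknotted, whence $D(\rho)$ admits a proper RTR to the unknot; this contradicts that $D(\rho) = 10_{139}$ has proper rational unknotting number $\geq 2$ by Claim~\ref{claim:rho}. Here the homotopy-invariance argument (that $\rho^*$ remains homotopic to the $1$--tangle) must be checked to survive a proper RTR rather than a crossing change, which it should by the definition of proper RTR preserving proper homotopy classes of strands. In Case~(3), with $P_0 = T$ homotopic to the $0$--tangle, I would compute the rational tangles produced by the standard trivializing arcs of the relevant higher-distance model tangles and show that integrality forces the defining parameters into a degenerate range making the summing disk inessential.

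The main obstacle will be locating and correctly invoking the precise classification theorem from \cite{GordonLuecke} governing distance-greater-than-two proper rational tangle replacements on knots with essential Conway spheres, and verifying that the model tangles appearing in its conclusion have the structural properties (hyperbolic double branched cover obstruction, computed rational tangle after the trivializing replacement) needed to reproduce the contradictions. Once that statement is pinned down, every other step is a routine adaptation of the proof of Proposition~\ref{prop:unknottingtanglesum}, with ``crossing change'' replaced by ``proper rational tangle replacement'' and ``unknotting number $\geq 2$'' replaced by ``proper rational unknotting number $\geq 2$''.
```
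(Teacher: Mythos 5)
Your overall strategy is the paper's: reduce to the $0$--tangle, cap with $\rho$ to form $K_\rho = N(\rho+T)$, observe that $\bdry\rho$ is an essential Conway sphere, and re-run the Case~(1) analysis of Proposition~\ref{prop:unknottingtanglesum} using that proper RTRs preserve the proper homotopy class of the strands and that $D(\rho)$ has \emph{proper rational} unknotting number $\geq 2$ by Claim~\ref{claim:rho}. That portion of your plan is exactly what the paper does, and it is the substantive content of the proof: it establishes that the arc $\alpha$ \emph{cannot} be isotoped off all essential Conway spheres of $K_\rho$. (One small bookkeeping point you leave implicit: the paper first disposes of distance $\Delta = 2$ by noting such replacements are crossing changes, already covered by Proposition~\ref{prop:unknottingtanglesum}, so only $\Delta > 2$ remains.)

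Where your plan diverges from reality is the endgame. You anticipate that the relevant Gordon--Luecke input for $\Delta > 2$ will be a trichotomy parallel to cases (1)--(3), with some ``higher-distance EM-type family'' requiring a new Case~(3)-style computation of the rational tangles produced by standard trivializing arcs. No such exceptional families arise. The paper instead applies \cite[Theorem 5.2]{GordonLuecke} to the double branched cover $X$ of the arc exterior, where the proper RTR corresponds to a Dehn filling of distance $\Delta(\mu,\gamma) = \Delta > 2$; only cases (1) and (4) of that theorem can occur, and in both the core of the surgery (the lift of $\alpha$) is disjoint from the essential tori of $X(\gamma)$. Since those tori include the lifts of the essential Conway spheres that your Case~(1) argument shows $\alpha$ must meet, the contradiction is immediate and the proof terminates there --- no analogue of Cases~(2) and (3) is needed. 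So your proposal is not wrong so much as over-engineered at the final step: the classification you flag as the ``main obstacle'' is in fact more restrictive for $\Delta > 2$ than for $\Delta = 2$, and once it is correctly located the proof is shorter than the crossing-change case, not longer.
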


\begin{proof}
This proof follows similarly to that of Proposition~\ref{prop:unknottingtanglesum} which proves the result for crossing changes.  Since proper rational tangle replacements have distance $\Delta \geq 2$ and those of distance $\Delta =2$ are realized as crossing changes, we may restrict attention to proper rational tangle replacements of distance $\Delta >2$.

As in the proof of Proposition~\ref{prop:unknottingtanglesum}, assume for contradiction that a tangle sum $T=T_1 + T_2$ admits a proper rational tangle replacement (of distance $\Delta >2$) along an arc $\alpha$ to the $0$--tangle.  Then, again using the tangle $\rho$ from Figure~\ref{fig:rho}, we may form the knot $K_\rho = N(\rho+T)$ which the proper rational tangle replacement transforms into the unknot $K_\rho^* - N(\rho+0)$. Let $\alpha^*$ be the arc on $K_\rho^*$ dual to $\alpha$ (ie.\ the core arc of the replacing rational tangle).  Hence $K_\rho$ is a knot with proper rational unknotting number $1$ along the arc $\alpha \subset T$.  As before, $\bdry \rho$ is an essential Conway sphere for $K_\rho$.

Now we shall show that $\alpha$ cannot be isotoped to be disjoint from all essential Conway spheres for $K_\rho$.  This argument follows exactly the same as {\bf Case (1)} of Proposition~\ref{prop:unknottingtanglesum} for crossing changes using that 
\begin{itemize}
    \item a proper rational tangle replacement also preserves the connectivity of the strands (ie.\ it preserves the homotopy type of the rational tangle), and 
    \item $D(\rho)$ actually has proper rational unknotting number $\geq 2$ by Claim~\ref{claim:rho} instead of just the regular unknotting number $\geq 2$.
\end{itemize}

Now, instead of \cite[Theorem 6.2]{GordonLuecke}, we must apply \cite[Theorem 5.2]{GordonLuecke} to the double branched cover of $X=K_\rho \cut \nbhd(\alpha) = K_\rho^* \cut \nbhd(\alpha^*)$, the exterior of the arc $\alpha$ on the knot $K_\rho$, which is also the exterior of the arc $\alpha^*$ on the unknot $K_\rho^*$.   In particular, since $K_\rho^*$ is the unknot, $\alpha^*$ lifts to a knot $k$ in $S^3$ whose exterior is $X=S^3 \cut \nbhd(k)$. In the context of \cite[Theorem 5.2]{GordonLuecke}, $S^3 = X(\mu)$ and the double branched cover of $K_\rho$ is $X(\gamma)$. Then the distance $\Delta(\mu,\gamma)$ between the curves $\mu$ and $\gamma$ in $\bdry X$ is the distance between the Dehn fillings, this agrees with the distance $\Delta$ of the rational tangle replacement. Therefore, since $\Delta >2$, only cases (1) and (4) of   \cite[Theorem 5.2]{GordonLuecke} apply.  However, in each of these two cases, the knot that is the lift of $\alpha$ is disjoint from the essential tori of $X(\gamma)$.  Since we have shown that $\alpha$ cannot be isotoped to be disjoint from all of the essential Conway spheres of $K_\rho$, its lift cannot be isotoped to be disjoint from all of the essential tori of $X(\gamma)$ that are the lifts of these essential Conway spheres.  This contradicts \cite[Theorem 5.2]{GordonLuecke}, and thus there can be no proper rational tangle replacement from $K_\rho$ to the unknot $K_\rho^*$.  Consequently, $T$ admits no proper rational tangle replacement to an integral tangle.
\end{proof}

\section*{Acknowledgements}
 
This paper was initiated as part of the SQuaRE program at the American Institute of Mathematics (AIM). We thank AIM for their support and hospitality.
KLB was partially supported by a grant from the Simons Foundation (grant \#523883  to Kenneth L.\ Baker).
DB acknowledges generous funding from The Leverhulme Trust (Grant RP2013-K-017) and additional support from EPSRC grant EP/H0313671. 
AHM is supported by The Thomas F. and Kate Miller Jeffress Memorial Trust, Bank
of America, Trustee. 
SAT is partially supported by NSF Grant DMS-2104022 and a Colby College Research Grant.


\bibliographystyle{alpha}
\bibliography{bibliography}

\newcommand{\etalchar}[1]{$^{#1}$}
\begin{thebibliography}{SdlCMR{\etalchar{+}}98}

\bibitem[ASZ{\etalchar{+}}92]{AdamsCozzarelli}
David~E. Adams, Eugene~M. Shekhtman, E.~Lynn Zechiedrich, Molly~B. Schmid, and
  Nicholas~R. Cozzarelli.
\newblock The role of topoisomerase {IV} in partitioning bacterial replicons
  and the structure of catenated intermediates in {DNA} replication.
\newblock {\em Cell}, 71(2):277--288, 10 1992.

\bibitem[BG21]{Barbensi}
Agnese Barbensi and Dimos Goundaroulis.
\newblock {$f$}-distance of knotoids and protein structure.
\newblock {\em Proc. A.}, 477(2246):Paper No. 20200898, 18, 2021.

\bibitem[CDGW]{snappy}
Marc Culler, Nathan~M. Dunfield, Matthias Goerner, and Jeffrey~R. Weeks.
\newblock Snap{P}y, a computer program for studying the geometry and topology
  of $3$-manifolds.
\newblock Available at \url{http://snappy.computop.org} (05/11/2021).

\bibitem[Con70]{conway}
J.~H. Conway.
\newblock An enumeration of knots and links, and some of their algebraic
  properties.
\newblock In {\em Computational {P}roblems in {A}bstract {A}lgebra ({P}roc.
  {C}onf., {O}xford, 1967)}, pages 329--358. Pergamon, Oxford, 1970.

\bibitem[EMn97]{EudaveMunoz}
Mario Eudave-Mu\~{n}oz.
\newblock Non-hyperbolic manifolds obtained by {D}ehn surgery on hyperbolic
  knots.
\newblock In {\em Geometric topology ({A}thens, {GA}, 1993)}, volume~2 of {\em
  AMS/IP Stud. Adv. Math.}, pages 35--61. Amer. Math. Soc., Providence, RI,
  1997.

\bibitem[GDBS17]{Goundaroulis2}
Dimos Goundaroulis, Julien Dorier, Fabrizio Benedetti, and Andrzej Stasiak.
\newblock Studies of global and local entanglements of individual protein
  chains using the concept of knotoids.
\newblock {\em Scientific Reports}, 7(1):6309, 2017.

\bibitem[GGL{\etalchar{+}}17]{Goundaroulis1}
Dimos Goundaroulis, Neslihan Gügümcü, Sofia Lambropoulou, Julien Dorier,
  Andrzej Stasiak, and Louis Kauffman.
\newblock Topological models for open-knotted protein chains using the concepts
  of knotoids and bonded knotoids.
\newblock {\em Polymers}, 9(9), 2017.

\bibitem[GI02]{gibsonishikawa}
William Gibson and Masaharu Ishikawa.
\newblock Links and {G}ordian numbers associated with generic immersions of
  intervals.
\newblock {\em Topology Appl.}, 123(3):609--636, 2002.

\bibitem[GL06]{GordonLuecke}
C.~McA. Gordon and John Luecke.
\newblock Knots with unknotting number 1 and essential {C}onway spheres.
\newblock {\em Algebr. Geom. Topol.}, 6:2051--2116, 2006.

\bibitem[ILM21]{properrationalunknotting}
Damian Iltgen, Lukas Lewark, and Laura Marino.
\newblock Khovanov homology and rational unknotting, 2021.

\bibitem[Kaw98]{kawamura}
Tomomi Kawamura.
\newblock The unknotting numbers of {$10_{139}$} and {$10_{152}$} are {$4$}.
\newblock {\em Osaka J. Math.}, 35(3):539--546, 1998.

\bibitem[KL04]{KauffmanLambropoulou}
Louis~H. Kauffman and Sofia Lambropoulou.
\newblock On the classification of rational tangles.
\newblock {\em Adv. in Appl. Math.}, 33(2):199--237, 2004.

\bibitem[Lac21]{Lackenby}
Marc Lackenby.
\newblock Links with splitting number one.
\newblock {\em Geom. Dedicata}, 214:319--351, 2021.

\bibitem[LM21]{knotinfo}
Charles Livingston and Allison~H. Moore.
\newblock Knotinfo: Table of knot invariants.
\newblock URL: \url{knotinfo.math.indiana.edu}, November 2021.

\bibitem[LMRH{\etalchar{+}}12]{LopezSchvartzman}
V.~Lopez, M.~L. Martinez-Robles, P.~Hernandez, D.~B. Krimer, and J.~B.
  Schvartzman.
\newblock {{T}opo {I}{V} is the topoisomerase that knots and unknots sister
  duplexes during {D}{N}{A} replication}.
\newblock {\em Nucleic Acids Res.}, 40(8):3563--3573, Apr 2012.

\bibitem[MSY82]{MSY}
William Meeks, III, Leon Simon, and Shing~Tung Yau.
\newblock Embedded minimal surfaces, exotic spheres, and manifolds with
  positive {R}icci curvature.
\newblock {\em Ann. of Math. (2)}, 116(3):621--659, 1982.

\bibitem[MT11]{MT}
Sergei Matveev and Vladimir Turaev.
\newblock A semigroup of theta-curves in 3-manifolds.
\newblock {\em Mosc. Math. J.}, 11(4):805--814, 822, 2011.

\bibitem[Sch85]{Scharlemann:Unknotting}
Martin~G. Scharlemann.
\newblock Unknotting number one knots are prime.
\newblock {\em Invent. Math.}, 82(1):37--55, 1985.

\bibitem[Sch90]{Scharlemann:Producing}
Martin Scharlemann.
\newblock Producing reducible {$3$}-manifolds by surgery on a knot.
\newblock {\em Topology}, 29(4):481--500, 1990.

\bibitem[SdlCMR{\etalchar{+}}98]{SantamariaSchvartzman}
D.~Santamaria, G.~de~la Cueva, M.~L. Martinez-Robles, D.~B. Krimer,
  P.~Hernandez, and J.~B. Schvartzman.
\newblock {{D}na{B} helicase is unable to dissociate {R}{N}{A}-{D}{N}{A}
  hybrids. {I}ts implication in the polar pausing of replication forks at
  {C}ol{E}1 origins}.
\newblock {\em J. Biol. Chem.}, 273(50):33386--33396, Dec 1998.

\bibitem[Tur12]{Turaev}
Vladimir Turaev.
\newblock Knotoids.
\newblock {\em Osaka J. Math.}, 49(1):195--223, 2012.

\bibitem[VHK{\etalchar{+}}96]{VigueraSchvartzman}
E.~Viguera, P.~Hernandez, D.~B. Krimer, A.~S. Boistov, R.~Lurz, J.~C. Alonso,
  and J.~B. Schvartzman.
\newblock {{T}he {C}ol{E}1 unidirectional origin acts as a polar replication
  fork pausing site}.
\newblock {\em J. Biol. Chem.}, 271(37):22414--22421, Sep 1996.

\bibitem[Wol87]{Wolcott}
Keith Wolcott.
\newblock The knotting of theta curves and other graphs in {$S^3$}.
\newblock In {\em Geometry and topology ({A}thens, {G}a., 1985)}, volume 105 of
  {\em Lecture Notes in Pure and Appl. Math.}, pages 325--346. Dekker, New
  York, 1987.

\bibitem[Zha91]{Zhangsonepagepaper}
Xingru Zhang.
\newblock Unknotting number one knots are prime: a new proof.
\newblock {\em Proc. Amer. Math. Soc.}, 113(2):611--612, 1991.

\end{thebibliography}

\end{document}